\DeclareMathOperator*{\argmin}{argmin}
\DeclareMathOperator*{\rank}{rank}
\DeclareMathOperator*{\sgn}{sgn}
\newcommand{\ds}{\displaystyle}
\newcommand{\nexto}{\kern -0.54em}
\newcommand{\dR}{{\rm {I\ \nexto R}}}
\newcommand{\dZ}{{\cal Z \kern -0.7em Z}}
\newcommand{\dC}{{\rm\hbox{C \kern-0.8em\raise0.2ex\hbox{\vrule
				height5.4pt width0.7pt}}}}
\newcommand{\dQ}{{\rm\hbox{Q \kern-0.85em\raise0.25ex\hbox{\vrule
				height5.4pt width0.7pt}}}}
\newcommand{\proofbox}{\hspace{\fill}{$\Box$}}
\newtheorem{lemma}{Lemma}
\newtheorem{theorem}{Theorem}
\newtheorem{corollary}{Corollary}
\newtheorem{remark}{Remark}
\newenvironment{proof}{Proof.}{\proofbox}
\definecolor{myblue}{rgb}{0.9,0.9,0.98}
\begin{document}

\author{Authors}

\author{
Regina S. Burachik\thanks{Mathematics, UniSA STEM, University of South Australia, Mawson Lakes, S.A. 5095, Australia. E-mail: regina.burachik@unisa.edu.au and yalcin.kaya@unisa.edu.au\,.}
\and
C. Yal{\c c}{\i}n Kaya\footnotemark[1]
\and
Walaa M. Moursi\thanks{Department of Combinatorics and Optimization, University of Waterloo, Waterloo, Ontario N2L~3G1, Canada. E-mail: walaa.moursi@uwaterloo.ca\,.}
}

\title{\bf Infeasible and Critically Feasible Optimal Control}

\maketitle

\vspace*{-7mm}

\begin{abstract} 
{\noindent\sf
We consider optimal control problems involving two constraint sets: one comprised of linear ordinary differential equations with the initial and terminal states specified and the other defined by the control variables constrained by simple bounds.  When the intersection of these two sets is empty, typically because the bounds on the control variables are too tight, the problem becomes infeasible.  In this paper, we prove that, under a controllability assumption, the ``best approximation'' optimal control minimizing the distance (and thus finding the ``gap'') between the two sets is of bang--bang type, with the ``gap function'' playing the role of a switching function.  The critically feasible control solution (the case when one has the smallest control bound for which the problem is feasible) is also shown to be of bang--bang type.  We present the full analytical solution for the critically feasible problem involving the (simple but rich enough) double integrator.  We illustrate the overall results numerically on various challenging example problems.
}
\end{abstract}
\begin{verse}
{\em Key words}\/: {\sf Optimal control, Infeasible problem, Inconsistent problem, Controllability, Bang--bang control,  Numerical methods.}
\end{verse}

\pagestyle{myheadings}
\markboth{}{\sf\scriptsize Infeasible and Critically Feasible Optimal Control\ \ by R. S. Burachik, C. Y. Kaya \& W. M. Moursi}

\section{Introduction}

Optimal control problems are infinite-dimensional optimization problems, involving processes evolving with time.  Infeasibility in optimal control arises in many situations, most typically when resources for a process are overly limited: for example, insufficient amount of insecticides for a dengue epidemic~\cite{SepVasSvi2020} or a highly restricted driving motor capacity of a vehicle~\cite{XiaCasBel2021}.  Infeasibility can also arise when one aims to achieve initial or terminal states which are not realistic or when there are state constraints which are too restrictive~\cite{ZauGatMue2023}.
In this paper, we consider infeasible and critically feasible optimal control problems, where the dynamics are governed by linear ordinary differential (state) equations with initial and end states specified and the control variables constrained by simple bounds.  

Infeasibility is also widely encountered in finite-dimensional optimization problems: Error in measurements, for example the noise in images taken during computer tomography, may give rise to an inconsistent set of equations in a pertaining optimization model, making the problem infeasible~\cite{SidJorPan2013}.  In~\cite{ByrCurNoc2010}, and in its extension~\cite{BurCurWan2014}, algorithms, which incorporate sequential quadratic programming methods, are proposed for infeasible finite-dimensional nonconvex optimization problems.  Under a set of conditions, these algorithms are shown to be convergent to an infeasible stationary point, minimizing a measure of infeasibility~\cite{ByrCurNoc2010, BurCurWan2014}.

In their  paper~\cite{BauMou2017}, Bauschke and Moursi study the Douglas--Rachford (DR) algorithm for finding 
a point in the intersection of two
nonempty closed and convex sets in (possibly 
infinite-dimensional) Hilbert spaces.  They show that, for the case when the intersection of the two sets is empty, i.e., when the problem is infeasible, the DR algorithm finds a pair of points in the respective  sets which minimize (as a measure of feasibility) the distance between the two sets; in other words, the DR algorithm finds the ``gap'' between the two constraint sets assuming that the gap is attained.  
The work in \cite{BauMou2017} has been further generalized 
in \cite{BauMou2023}. Indeed, the authors in \cite{BauMou2023}
proved that under mild assumptions (see also \cite{Moursi2022}) the DR algorithm can find a 
\emph{generalized solution} (this is also known as \emph{normal solution})  
(see \cite[Definition~3.7]{BauHarMou2014}) for inconsistent convex optimization problems, i.e., when the solution set is empty.

The results in~\cite{BauMou2023} are not only applicable to infinite-dimensional problems (e.g. optimal control problems), but also if one of the constraints is {\em hard}, that is a particular constraint must be satisfied, then a solution satisfying the hard constraint, that is an {\em implementable solution}, can be returned.  In the present paper, we use the idea of the minimization of the distance between the two sets, namely finding the ``gap'' between the two sets (assuming it is attained), as our motivation in finding a best approximation solution to infeasible optimal control problems so that the optimal control we find is also implementable.

The optimal control problems we consider have two constraint sets: one involves the ODE with specified initial and end states (this set is an affine subspace, which is closed and convex) and the other involves the box constraint on the control (this set is a box, which is also closed and convex).  We pose the problem of finding a best approximation pair
to the infeasible problem as one of minimizing the distance between these two constraint sets and finding the ``gap''.  In practical optimal control problems, the control variable is expected to satisfy the simple bounds imposed on it; therefore, we regard the box as a {\em hard} constraint set.

A best approximation pair of Problem~(Pf) 
below (see equation \eqref{prob:min_dist} below)  can be expressed as a solution of a minimization problem which is strongly convex w.r.t. one the variables (see  Lemmas~\ref{lem:D1}--\ref{lem:D2}).

We prove that, under a controllability assumption, the control variable that belongs to the box and solves the best approximation problem is of {\em bang--bang type}, i.e., the value of the control variable switches between its lower and upper bounds.  Interestingly, the sign of a gap function component determines which bound value the corresponding control variable component in the box must take; in other words, a gap function component plays the role of a {\em switching function}.  We also formulate the problem of finding a critically feasible solution, i.e., a solution for the least bound on the control resulting in a nonempty intersection of the two constraint sets.  We prove that the critically feasible optimal control is also of bang--bang type.  For the case of a double integrator problem, which is often employed as part of case studies for optimal control, we derive the full analytical solution for the critically feasible optimal control problem.  

For a numerical illustration of the results, both for the critically feasible and infeasible cases, we study example problems involving (i) a double integrator, (ii) a damped oscillator and (iii) a machine tool manipulator, in the order of increasing numerical difficulty.

The paper is organized as follows.  In Section~\ref{sec:prem}, we introduce the optimal control problem and define the two constraint sets, namely the affine space and the box.  In Section~\ref{sec:best_approx}, we define the problem of best approximation, provide the maximum principle, discuss controllability and existing results, and derive the first main result of the paper on infeasible problems in Theorem~\ref{thm:gap&bbang}.  In Section~\ref{sec:crit_feas}, we introduce the concept of critical feasibility and provide the second main result in Theorem~\ref{theo:crit}. We also provide the full critically feasible solution for a problem involving the double integrator in Theorem~\ref{control_crit}.  In Section~\ref{sec:num_exp}, we carry out numerical experiments on various example problems to illustrate the results of the paper.  Finally in Section~\ref{sec:conc}, we provide concluding remarks and comment on future lines of research.

\section{Preliminaries}
\label{sec:prem}

We consider optimal control problems where the aim is to find a control $u$ which minimizes a general functional
\begin{equation}  \label{obj_fun}
\ds \int_{0}^{1} f_0(x(t),u(t),t)\, dt\,,
\end{equation}
subject to the differential equation constraints
\begin{equation}  \label{ODE}
	\dot{x}(t) = A(t)\,x(t) + B(t)\,u(t)\,,\ \ \mbox{for a.e.\ } t\in[0,1]\,,
	\end{equation}
with $\dot{x} := dx/dt$, and the boundary conditions
\begin{equation}  \label{BC}
	\varphi(x(0),x(1)) = 0\,.
\end{equation}
In the optimal control problem above the time horizon is set to be $[0,1]$, but without loss of generality it can be taken as any interval $[t_0,t_f]$, with $t_0$ and $t_f$ specified.  The integrand function $f_0:\dR^n\times\dR^m\times[0,1]\to\dR_+$ is continuous. We define the {\em state variable vector} $x:[0,1]\to\dR^n$ with $x(t) := (x_1(t)\,\ldots,x_n(t))\in\dR^n$ and the {\em control variable vector}\linebreak $u:[0,1]\to\dR^m$ with $u(t) := (u_1(t)\,\ldots,u_m(t))\in\dR^m$. 
The time-varying matrices\linebreak $A:[0,1]\to \dR^{n\times n}$ and $B:[0,1]\to\dR^{n\times m}$ are continuous.  The vector function $\varphi:\dR^{2n}\to\dR^r$, with $\varphi(x(0),x(1)) := (\varphi_1(x(0),x(1)),\ldots,\varphi_r(x(0),x(1)))\in\dR^r$, is affine.


It is realistic, especially in practical situations, to consider restrictions on the values $u$ is allowed to take.  In many applications, it is common practice to impose simple bounds on the components of $u(t)$; namely,
\begin{equation}  \label{bounds}
	\underline{a}_i(t) \le u_i(t) \le \overline{a}_i(t)\,,\ \ \mbox{for a.e.\ } t\in[0,1]\,,
\end{equation}
where, respectively, the lower and upper bound functions $\underline{a}_i,\overline{a}_i:[0,1]\to\dR$ are continuous and that $\underline{a}_i(t) < \overline{a}_i(t)$, for all $t\in[0,1]$, $i = 1,\ldots,m$.  We define for convenience $\underline{a} := (\underline{a}_1\,\ldots,\underline{a}_m)$ and $\overline{a} := (\overline{a}_1\,\ldots,\overline{a}_m)$, and write in concise form $\underline{a}(t) \le u(t) \le \overline{a}(t)$; in other words, we {\em formally} state
\begin{equation}  \label{set_bounds}
	u(t)\in U(t) := [\underline{a}(t),\overline{a}(t)]\subset\dR^m,\ \ \mbox{for a.e.\ } t\in[0,1]\,,
\end{equation}
as an expression alternative but equivalent to~\eqref{bounds}.
	
The objective functional in~\eqref{obj_fun} and the constraints in~\eqref{ODE}--\eqref{BC} and \eqref{bounds} can be put together to present the optimal control problem as follows.
\[
\mbox{(P) }\left\{\begin{array}{rl}
	\ds\min_{u(\cdot)} & \ \ \ds\int_0^1 f_0(x(t),u(t),t)\, dt \\[4mm] 
	\mbox{subject to} & \ \ \dot{x}(t) = A(t)\,x(t) + B(t)\,u(t)\,,\ \ \mbox{for a.e.\ } t\in[0,1]\,, \\[2mm]
	& \ \ \varphi(x(0),x(1)) = 0\,, \\[2mm]
	& \ \ \underline{a}_i(t) \le u_i(t) \le \overline{a}_i(t)\,,\ \ \mbox{for a.e.\ } t\in[0,1]\,,\ \ i = 1,\ldots,m\,.
	\end{array} \right.
\]
We split the constraints of Problem~(P) into two sets:
\begin{eqnarray} 
	&& {\cal A} := \big\{u\in L^{2}([0,1];\dR^m)\ |\ \exists x\in W^{1,2}([0,1];\dR^n)\mbox{ which solves } \nonumber \\[1mm]
	&&\hspace*{45mm} \dot{x}(t) = A(t)\,x(t) + B(t)\,u(t)\,,\ \ \mbox{for a.e.\ } t\in[0,1]\,, \mbox{ and }\nonumber \\[1mm]
	&&\hspace*{45mm} \varphi(x(0),x(1)) = 0 \big\}\,, \label{A} \\[2mm]
	&& {\cal B} := \big\{u\in L^{2}([0,1];\dR^m)\ |\ \underline{a}(t) \le u(t) \le \overline{a}(t)\,,\ \mbox{for a.e.\ } t\in[0,1]\big\}\,. \label{B}
\end{eqnarray}
We assume that the control system $\dot{x}(t) = A(t)x(t) + B(t)u(t)$ is {\em controllable}---See the precise definition in Section~\ref{sec:controllability}. Then there exists a (possibly not unique) $u(\cdot)$ such that, when this $u(\cdot)$ is substituted, the boundary-value problem given in ${\cal A}$ has a solution $x(\cdot)$.  In other words, ${\cal A} \neq \emptyset$.  Also, clearly, ${\cal B} \neq \emptyset$.  Recall that $\varphi$ is affine, so the constraint set~${\cal A}$ is an {\em affine subspace}. We note that by \cite[Corollary 1]{BurCalKay2024},
\begin{equation}  \label{eq:A closed}
{\cal A}\ \ \mbox{is closed}.  
\end{equation}
Given that ${\cal B}$ is a {\em box}, the constraints turn out to be two convex sets in Hilbert space.  In particular, we note that ${\cal B}$ is closed in $L^2(0,1;\dR^m)$.  
It will be convenient to use the expression
\[
B(t)u(t) = \sum_{i=1}^m b_i(t)\,u_i(t)\,,
\]
where $b_i(t)$ is the $i$th column of the matrix $B(t)$, interpreted as the column vector associated with the $i$th control component $u_i$.
	
If ${\cal A} \cap {\cal B} \neq \emptyset$\,, then one has a {\em feasible} LQ optimal control problem.  The {\em feasibility problem} is posed as one of finding an element in ${\cal A} \cap {\cal B}$, namely:
\begin{equation}  \label{prob:feas}
	\mbox{Find\ \ } u \in {\cal A} \cap {\cal B}\,.
\end{equation}
If, however, ${\cal A} \cap {\cal B} = \emptyset$\,, then the problem is said to be {\em infeasible}.  The feasibility problem in~\eqref{prob:feas} has obviously no solution in this case, but in Section~\ref{sec:best_approx} we will pose the problem of finding (in some sense) a {\em best approximation solution}.

\section{Best Approximation Solution to the Infeasible Problem}
\label{sec:best_approx}

Consider the case when ${\cal A} \cap {\cal B} = \emptyset$.  
We define the {\em best approximation pair} as 
$(u_{\cal A}^*, u_{\cal B}^*) \in {\cal A}\times{\cal B}$ which minimizes the squared distance between the two sets.  Namely $(u_{\cal A}^*, u_{\cal B}^*)$ is in this case required to solve
\begin{equation}  \label{prob:min_dist}
	\min_{\substack{u_{\cal A}\in {\cal A} \\ u_{\cal B}\in {\cal B}}}\ \ \ds\frac{1}{2}\,\|u_{\cal A} - u_{\cal B}\|_{L^2}^2\,,
\end{equation}
where $\|\cdot\|_{L^2}$ is the ${L^2}$ norm. In other words, we want to minimize the ``gap'' between the two sets.  
Observe that $\cal{A}-\cal{B}$ is convex, closed (by, e.g., \cite[Proposition~3.42]{BC2017}),
and nonempty. Therefore, it
follows from \cite[Section~2]{BauBor94}
 and the fact that ${{\cal A}-{\cal B}}$ is closed
 that
\begin{equation}  \label{gap_vbest}
	u^*_{\cal A} - u^*_{\cal B}\,=P_{{{\cal A}-{\cal B}}}(0).
\end{equation}

We define the {\em gap} ({\em function}) {\em vector} (see \cite{BauMou2017})
\begin{equation}  \label{gap_v}
	v := u_{\cal A} - u_{\cal B},\,
 \qquad (u_{\cal A}, u_{\cal B}) \in {\cal A}\times{\cal B}.
\end{equation}
Using $u_{\cal A} = v + u_{\cal B}$ and the definitions of ${\cal A}$ and ${\cal B}$ in~\eqref{A}--\eqref{B}, the problem in~\eqref{prob:min_dist} can be rewritten in the format of a classical, or standard, optimal control problem as follows.
\[
\mbox{(Pf) }\left\{\begin{array}{rl}
\ds\min_{v(\cdot),u_{\cal B}(\cdot)} & \ \ \ds\frac{1}{2}\int_0^1 \|v(t)\|_2^2\, dt \\[2mm] 
\mbox{subject to} 
& \ \ \ds\dot{x}(t) = A(t)x(t) + \sum_{i=1}^m b_i(t)(v_i(t) + u_{{\cal B},i}(t))\,,\ \ \mbox{for a.e.\ } t\in[0,1]\,, \\[2mm]
& \ \ \varphi(x(0),x(1)) = 0\,, \\[2mm]
& \ \ \underline{a}_i(t) \le u_{{\cal B},i}(t) \le \overline{a}_i(t)\,,\ \ \mbox{for a.e.\ } t\in[0,1]\,,\ \ i = 1,\ldots,m\,,
\end{array} \right.
\]
where $\|\cdot\|_2$ is the Euclidean norm.  Problem~(Pf) is an optimal control problem with two control variable vectors, namely $v$ and $u_{\cal B}$, where $v(t) := (v_1(t),\ldots,v_m(t))\in\dR^m$ and\linebreak $u_{\cal B}(t) := (u_{{\cal B},1}(t),\ldots,u_{{\cal B},m}(t))\in\dR^m$.

\subsection{Properties of Problem~(Pf)}

Denote by $S_f$ the set of solutions of Problem~(Pf). Recall that, for any given set $C$ of a Hilbert space $H$, the {\em indicator function of $C$}, denoted by $\iota_C:H\to \mathbb{R}\cup \{+\infty\}$, is defined as $\iota_C(x)=0$ for every $x\in C$, and $\iota_C(x)=+\infty$ for every $x\not\in C$. We show in this section the main properties of Problem~(Pf). 

\begin{lemma}\label{lem:D1}
    The constraint set of Problem~{\em (Pf)} is convex and (strongly and weakly) closed (i.e., closed w.r.t. the norm topology and w.r.t. the weak topology in $L^2$).
\end{lemma}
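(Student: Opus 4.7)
The plan is to view the constraint set of Problem~(Pf), call it $\mathcal{C}$, as the intersection of two sets in $L^2([0,1];\dR^m)\times L^2([0,1];\dR^m)$, each of which is easy to handle. Writing $u:=v+u_{\cal B}$, the differential-equation-plus-boundary-condition constraint is precisely $u\in{\cal A}$, while the box bounds on $u_{\cal B}$ say that $u_{\cal B}\in {\cal B}$. Hence
\[
\mathcal{C}=L(\mathcal{A})\cap(L^{2}\times {\cal B}),
\qquad
L(\mathcal{A}):=\{(v,u_{\cal B})\in L^2\times L^2 \mid v+u_{\cal B}\in\mathcal{A}\},
\]
so I would prove convexity and closedness separately for each of these two sets and then intersect.

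First I would establish convexity: the map $T:(v,u_{\cal B})\mapsto v+u_{\cal B}$ is linear, so $L(\mathcal{A})=T^{-1}(\mathcal{A})$ is convex since $\mathcal{A}$ is an affine subspace (hence convex); likewise $L^{2}\times{\cal B}$ is convex because ${\cal B}$ is a box. The intersection of two convex sets is convex, giving convexity of $\mathcal{C}$.

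Next, for strong (norm) closedness I would use that $T$ is continuous from $L^{2}\times L^{2}$ to $L^{2}$ (in fact Lipschitz with constant $\sqrt{2}$ on the product norm), and that $\mathcal{A}$ is closed by the cited result~\eqref{eq:A closed}. Thus $L(\mathcal{A})=T^{-1}(\mathcal{A})$ is closed. The set $L^{2}\times{\cal B}$ is closed because ${\cal B}$ is closed in $L^{2}$ (the box defined by measurable pointwise inequalities is a classical closed subset of $L^{2}$). Intersecting two strongly closed sets yields strong closedness of~$\mathcal{C}$.

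Finally, for weak closedness I would invoke Mazur's theorem: a convex subset of a Banach space is weakly closed if and only if it is strongly closed. Since $\mathcal{C}$ has already been shown to be convex and strongly closed, this immediately delivers weak closedness and completes the proof. No step here is a real obstacle; the only substantive input is the closedness of $\mathcal{A}$, which the paper has already imported from~\cite{BurCalKay2024}, and the standard Mazur argument for passing from strong to weak closedness of convex sets.
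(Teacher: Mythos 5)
Your proposal is correct and follows essentially the same route as the paper: the paper writes the constraint set as the preimage of ${\cal A}\times{\cal B}$ under the continuous linear bijection $(z,x)\mapsto(z+x,x)$, which amounts to your intersection $T^{-1}({\cal A})\cap(L^{2}\times{\cal B})$, and it likewise obtains weak closedness from convexity plus strong closedness (Mazur). The only cosmetic difference is that the paper spells out the a.e.-subsequence argument for the closedness of ${\cal B}$, which you cite as standard.
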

\begin{proof}
The constraint set of Problem~(Pf) can be written as follows:
\begin{equation}\label{rem:pro}
    {\cal D}:=\{ (v,u_{\cal B})\in L^{2}([0,1];\dR^m)\times L^{2}([0,1];\dR^m)\::\:  v+u_{\cal B} \in {\cal A},\, u_{\cal B} \in {\cal B}\},
\end{equation}
where ${\cal A},\,{\cal B}$ are as in \eqref{A} and \eqref{B}, respectively. Consider the map
\[
\Psi:L^{2}([0,1];\dR^m)\times L^{2}([0,1];\dR^m)\to L^{2}([0,1];\dR^m)\times L^{2}([0,1];\dR^m),\] 
defined by $\Psi(z,x):=(z+x,x)$. The map $\Psi$ is a linear bijection which is continuous in $L^2$. The convexity of ${\cal D}$ now follows from the fact that $\Psi^{-1}$ is linear and ${\cal A}\times {\cal B}$ (being the product of an affine set and a box), is convex. Note also that ${\cal A}\times {\cal B}$ is closed because each factor is closed. Indeed, ${\cal A}$ is closed by \eqref{eq:A closed}.  The set ${\cal B}$ is closed in $L^{2}([0,1];\dR^m)$ because every sequence converging in $L^{2}([0,1];\dR^m)$ has a subsequence converging a.e. in $[0,1]$. The latter implies that every limit in the topology of $L^2$ must belong to ${\cal B}$. Altogether, the set ${\cal A}\times {\cal B}$ is closed in $L^{2}([0,1];\dR^m)$ and therefore ${\cal D}$ is closed because it is the preimage of a closed set by a continuous function. The fact that the closedness holds for both the strong and weak topology follows from convexity.
\end{proof}

\begin{lemma}\label{lem:D2}
 The solution set of Problem~{\em (Pf)} is not empty. Moreover, if $(v_1,u_1),\, (v_2,u_2)$ solve Problem~{\em (Pf)}, then $v_1=v_2$ (i.e., the coordinate $v$ of any solution to Problem~{\em (Pf)} is unique).
\end{lemma}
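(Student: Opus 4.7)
The approach I would take is to recognize Problem~(Pf) as equivalent to projecting the origin in $L^2$ onto the Minkowski difference ${\cal A}-{\cal B}$, which is precisely the observation already made in the paragraph containing~\eqref{gap_vbest}. The key reduction to point out is that the objective $\tfrac{1}{2}\|v\|_{L^2}^2$ involves only $v$, and that the image of ${\cal D}$ under the linear map $(v,u_{\cal B})\mapsto v$ coincides with ${\cal A}-{\cal B}$: any $(v,u_{\cal B})\in{\cal D}$ yields $v=(v+u_{\cal B})-u_{\cal B}\in{\cal A}-{\cal B}$, while conversely any $v\in{\cal A}-{\cal B}$ can be written $v=u_{\cal A}-u_{\cal B}$ with $u_{\cal A}\in{\cal A}$ and $u_{\cal B}\in{\cal B}$, producing a feasible pair $(v,u_{\cal B})\in{\cal D}$ that attains the same value of the objective.

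For existence, I would then invoke the three properties of ${\cal A}-{\cal B}$ already recorded in the excerpt just above~\eqref{gap_vbest} --- nonemptiness, convexity, and closedness in $L^2$ --- and apply the Hilbert-space projection theorem to obtain a unique $v^*:=P_{{\cal A}-{\cal B}}(0)$. Writing $v^*=u_{\cal A}^*-u_{\cal B}^*$ for some $u_{\cal A}^*\in{\cal A}$, $u_{\cal B}^*\in{\cal B}$ then furnishes a pair $(v^*,u_{\cal B}^*)\in{\cal D}$ which, by the equivalence above, minimizes the objective over ${\cal D}$, so $S_f\neq\emptyset$. For uniqueness of the $v$-coordinate, if $(v_1,u_1)$ and $(v_2,u_2)$ both solve~(Pf), then $v_1,v_2\in{\cal A}-{\cal B}$ and both attain the minimum of $\tfrac{1}{2}\|\cdot\|_{L^2}^2$ on ${\cal A}-{\cal B}$; uniqueness of the best approximation from a nonempty closed convex subset of a Hilbert space forces $v_1=v_2=v^*$.

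The only conceptual subtlety I would flag --- rather than a technical obstacle --- is that the objective is strongly convex in $v$ but constant in $u_{\cal B}$. A direct coercivity or weak-compactness argument applied to the joint variable $(v,u_{\cal B})$ on the weakly closed set ${\cal D}$ from Lemma~\ref{lem:D1} therefore cannot close the existence proof on its own, and one must not expect uniqueness of $u_{\cal B}$ --- which is exactly why the lemma claims uniqueness only for the $v$-component. The projection-based reduction bypasses both issues and gives both conclusions in one stroke.
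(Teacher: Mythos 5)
Your proof is correct, but it takes a genuinely different route from the paper. You reduce~(Pf) to the best-approximation problem of projecting $0$ onto the Minkowski difference ${\cal A}-{\cal B}$, using the observation that the objective depends only on $v$ and that the $v$-slice of the feasible set ${\cal D}$ is exactly ${\cal A}-{\cal B}$; existence and uniqueness of the $v$-component then both drop out of the Hilbert-space projection theorem, provided one accepts the nonemptiness, convexity and closedness of ${\cal A}-{\cal B}$ recorded before~\eqref{gap_vbest}. The paper instead argues directly on ${\cal D}$: it writes the objective as $h(v,u)=\tfrac12\int_0^1\|v(t)\|_2^2\,dt+\iota_{\cal B}(u)$, notes that ${\cal B}$ is bounded (since $\underline{a},\overline{a}$ are continuous), so $h$ is coercive on the feasible set, intersects ${\cal D}$ with a large ball to get a weakly compact set (Lemma~\ref{lem:D1} plus Banach--Alaoglu and convexity), and concludes existence from weak lower semicontinuity of $h$; uniqueness of $v$ follows from strong convexity of $h$ in $v$. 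Your route is shorter and delivers both conclusions at once, but it leans on the closedness of ${\cal A}-{\cal B}$ (a nontrivial fact, true here essentially because ${\cal B}$ is bounded, closed and convex, hence weakly compact), whereas the paper's Weierstrass-type argument is self-contained modulo Lemma~\ref{lem:D1}. One inaccuracy in your closing remark: the direct coercivity/weak-compactness argument on the joint variable \emph{does} close the existence proof, because the indicator $\iota_{\cal B}$ together with the boundedness of ${\cal B}$ makes the effective objective coercive in $(v,u_{\cal B})$ jointly --- this is precisely the paper's argument; what is true is only that uniqueness cannot be expected in the $u_{\cal B}$-component, as you say.
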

\begin{proof}
Note first that (Pf) can be equivalently written as having for objective function 
\[
h(v,u)= \frac{1}{2}\int_0^1 \|v(t)\|_2^2 +\iota_{\cal B}(u),
\]
where $\iota_{\cal B}$ is the indicator function of the set ${\cal B}$ and ${\cal B}$ is as in \eqref{B}. Now the second statement follows directly from the fact that $h$ is strongly convex in the variable $v$. We proceed next to prove the first statement. Since the functions $\underline{a},\, \overline{a}$ are continuous, the set ${\cal B}$ is bounded, and hence $h$ is coercive in both variables. Consider the set ${\cal D}$ as in the proof of Lemma \ref{lem:D1}. The coerciveness of $h$ allows us to find a closed ball $B[0,R]$ such that a solution of (Pf) (if any) must be in $D_0:={\cal D}\cap B[0,R]$. By Lemma \ref{lem:D1}, $D_0$ is convex and closed. It is also bounded because it is contained in the ball $B[0,R]$. By Bourbaki-Alaoglu and convexity, $D_0$ is weakly compact. Since the function $h_1(v):=\frac{1}{2}\int_0^1 \|v(t)\|_2^2$ is continuous and convex, it is weakly lower-semicontinuous. Recall that the set ${\cal B}$ is closed and convex, and hence weakly closed and convex, Therefore, the function $h_2:=\iota_{\cal B}$ is weakly lower-semicontinuous. Altogether, 
\( h= h_1+h_2\) is weakly lower-semicontinuous. We can now consider the problem 
\[
\mbox{(PD)} \min_{(z,x)\in D_0} h(z,x).
\]
By construction, the solution set of Problem~(PD) is $S_f$. Since $h$ is weakly lower-semicontinuous and $D_0$ is weakly compact, Problem~(PD) has a solution, and hence Problem~(Pf) has (the same)  solution(s).
\end{proof}

\subsection{\bf Maximum Principle for Problem~(Pf)}

In what follows we will derive the necessary conditions of optimality for Problem~(Pf), using the {\em maximum principle}.  Various forms of the maximum principle and their proofs can be found in a number of reference books---see, for example, \cite[Theorem~1]{PonBolGamMis1986}, \cite[Chapter~7]{Hestenes1966}, \cite[Theorem~6.4.1]{Vinter2000}, \cite[Theorem~6.37]{Mordukhovich2006}, and \cite[Theorem~22.2]{Clarke2013}.  We will state the maximum principle suitably utilizing these references for our setting and notation.  
	
First, define the {\em Hamiltonian function} $H:\dR^n \times \dR^m \times \dR^m \times \dR^n \times [0,1]\to \dR$ for Problem~(Pf) as
\begin{equation}  \label{Hamiltonian}
	H(x(t),v(t),u_{\cal B}(t),\lambda(t),t) := \frac{1}{2}\,\|v(t)\|_2^2 + \left\langle\lambda(t), A(t)\,x(t) + \sum_{i=1}^m b_i(t)\,(v_i(t) + u_{{\cal B},i}(t))\right\rangle,
\end{equation}
where $\lambda(t) := (\lambda_1(t),\ldots,\lambda_n(t))\in\dR^n$ is the {\em adjoint variable} (or {\em costate}) {\em vector} such that
\[ 
\dot{\lambda}(t) := -\frac{\partial H}{\partial x}(x(t), v(t), u_{\cal B}(t), \lambda(t), t)\,,
\] 
i.e.,
\begin{equation} \label{adjoint} 
	\dot{\lambda}(t) = -A^T(t)\,\lambda(t)\,,
\end{equation} 
where the {\em transversality conditions} involving $\lambda(0)$ and $\lambda(1)$ depend on the boundary condition $\varphi(x(t_0),x(t_f)) = 0$, but are not  expressed here.

\noindent	
{\bf Maximum Principle.}\ \  Suppose that the triplet
\[
(x,v,u_{\cal B})\in W^{1,\infty}([0,1];\dR^n) \times L^2([0,1];\dR^m) \times L^\infty([0,1];\dR^m)
\]
is optimal for Problem~(Pf).  Then there exists a continuous adjoint variable vector $\lambda\in W^{1,\infty}([0,1];\dR^n)$ as defined in~\eqref{adjoint}, such that $\lambda(t)\neq{\bf 0}$ for all $t\in[0,1]$, and that, for a.e.\ $t\in[0,1]$,
\begin{equation}  \label{opt_v_gen}
	\frac{\partial H}{\partial v_i}(x(t), v(t), u_{\cal B}(t), \lambda(t), t) = v_i(t) + b_i^T(t)\,\lambda(t) = 0\,,
\end{equation}
and
\begin{equation}  \label{opt_uB_gen}
	u_{{\cal B},i}(t) = \argmin_{w_i\in[\underline{a}_i(t), \overline{a}_i(t)]} H(x(t), v(t), w_i, \lambda(t), t) = \argmin_{w_i\in[\underline{a}_i(t), \overline{a}_i(t)]} b_i^T(t)\,\lambda(t)\, w_i\,,
\end{equation}
for $i = 1,\ldots,m$.  Condition~\eqref{opt_v_gen} can in turn be rewritten as
\begin{equation}  \label{opt_vi}
	v_i(t) = -b_i^T(t)\lambda(t)\,,
\end{equation}
for $i = 1,\ldots,m$, i.e.,
\begin{equation}  \label{opt_v}
	v(t) = -B^T(t)\lambda(t)\,,
\end{equation}
for all $t\in[0,1]$.  On the other hand, Condition~\eqref{opt_uB_gen} results in, also by incorporating~\eqref{opt_vi},
\begin{equation}  \label{opt_uB}
	u_{{\cal B},i}(t) = \left\{\begin{array}{rl}
	\overline{a}_i(t)\,, &\ \ \mbox{if\ \ } v_i(t) > 0\,, \\[1mm]
	\underline{a}_i(t)\,, &\ \ \mbox{if\ \ } v_i(t) < 0\,, \\[1mm]
	\mbox{undetermined}\,, &\ \ \mbox{if\ \ } v_i(t) = 0\,,
		\end{array} \right.
\end{equation}
for a.e.\ $t\in[0,1]$, $i = 1,\ldots,m$.  
	
	
The expression in \eqref{opt_uB} prompts two types of optimal control that are widely studied in the optimal control literature, as elaborated next.
	
\noindent 
{\bf Bang--Bang and Singular Types of Optimal Control.}\ \  If $v_i(t) \neq 0$ for a.e.\ $t\in[t',t'']\subset[0,1]$ with $t' < t''$, then the optimal control $u_{{\cal B},i}(t)$ in~\eqref{opt_uB} is referred to be of {\em bang--bang} type in the interval $[t',t'']$.  In this case, the optimal control might {\em switch} from $u_{{\cal B},i}(t) = \overline{a}_i(t)$ to $u_{{\cal B},i}(t) = \underline{a}_i(t)$, or vice versa, at some finitely many {\em switching times} in $[t',t'']$.  However, if $v_i(t) = 0$ for a.e.\ $t\in[s',s'']\subset[0,1]$, $s'<s''$,  then the optimal control is said to be of {\em singular} type in the interval $[s',s'']$.  Note that in general the optimal control might also switch from a bang-arc to a singular arc, and vice versa.
	
The optimality conditions we have just derived in~\eqref{opt_v}--\eqref{opt_uB} for Problem~(Pf) give rise to Theorem~\ref{thm:gap&bbang} stated further below.  If the dynamical control system is {\em controllable}, a definition of which is to be provided next, then the theorem eliminates the singularity for $u_i$, i.e., that the condition $v_i(t) = 0$ in~\eqref{opt_uB} can happen only at isolated time instants, and expresses the optimal $u_{{\cal B},i}(\cdot)$ as a control which is of bang--bang type.

Before stating Theorem~\ref{thm:gap&bbang} on the best approximation solution we first discuss the concept of controllability and some existing results.
 
\subsection{Controllability}
\label{sec:controllability}

The state equation, or the control system, 
\begin{equation}  \label{contr_sys}
	\dot{x}(t) = A(t)x(t) + B(t)u(t),
\end{equation}
is said to be {\em controllable} on a finite interval $[t_0,t_f]$ if given any initial state $x(t_0) = x_0$ there exists a continuous control $u(\cdot)$ such that the corresponding solution of~\eqref{contr_sys} satisfies $x(t_f) = 0$.
	
The solution of the (uncontrolled) system $\dot{x}(t) = A(t)x(t)$, with $x(0) = x_0$, is given by $x(t) = \Phi_A(t_0,t_f)\,x_0$.  Recall that $\Phi_A(t_0,t_f)$ is the {\em state transition matrix}, or the fundamental matrix, of the differential equation.
	
\begin{theorem}[Controllability via a Gramian Test Matrix~{\cite[Theorem~9.2]{Rugh1996}}]  \label{contr_Gram} \ \\
The system in~\eqref{contr_sys} is controllable on $[t_0,t_f]$ if and only if the $n\times n$ (Gramian) matrix
\[
	W(t_0,t_f) := \int_{t_0}^{t_f} \Phi_A(t_0,t_f)B(t)B^T(t)\Phi_A^T(t_0,t_f)\,dt\,,
\]
is invertible.
\end{theorem}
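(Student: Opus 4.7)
The plan is to derive both directions from the Cauchy (variation-of-constants) representation
\begin{equation*}
x(t_f) = \Phi_A(t_f,t_0)\,x_0 + \int_{t_0}^{t_f} \Phi_A(t_f,\tau)\,B(\tau)\,u(\tau)\,d\tau,
\end{equation*}
so that reaching $x(t_f)=0$ from a prescribed $x_0$ is equivalent, after left-multiplying by $\Phi_A(t_0,t_f)$ and using the composition identity $\Phi_A(t_0,t_f)\Phi_A(t_f,\tau)=\Phi_A(t_0,\tau)$, to
\begin{equation*}
\int_{t_0}^{t_f} \Phi_A(t_0,\tau)\,B(\tau)\,u(\tau)\,d\tau \;=\; -\Phi_A(t_0,t_f)\,\Phi_A(t_f,t_0)\,x_0 \;=\; -x_0.
\end{equation*}
Controllability therefore reduces to surjectivity onto $\dR^n$ of the linear operator $L:u(\cdot)\mapsto\int_{t_0}^{t_f}\Phi_A(t_0,\tau)B(\tau)u(\tau)\,d\tau$, and I would observe that $W(t_0,t_f)$ is exactly $LL^*$ under the standard $L^2$ inner product on controls, so the theorem becomes the familiar Hilbert-space statement that $L$ is surjective iff $LL^*$ is invertible.

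For the sufficiency direction, assuming $W(t_0,t_f)$ is invertible, I would exhibit the explicit control
\begin{equation*}
u(\tau) \;:=\; -B^T(\tau)\,\Phi_A^T(t_0,\tau)\,W(t_0,t_f)^{-1}\,x_0,
\end{equation*}
which is continuous because $A(\cdot)$, $B(\cdot)$, and hence $\Phi_A$, are continuous. Substituting into the reduced integral identity and using the definition of $W$ gives $-W(t_0,t_f)W(t_0,t_f)^{-1}x_0=-x_0$, so this $u(\cdot)$ steers $x_0$ to $0$ at $t_f$, establishing controllability.

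For the necessity direction I would argue by contrapositive. Suppose $W(t_0,t_f)$ is singular and pick $z\neq 0$ with $W(t_0,t_f)z=0$. Then $z^T W(t_0,t_f) z=0$ yields
\begin{equation*}
\int_{t_0}^{t_f}\bigl\|B^T(\tau)\Phi_A^T(t_0,\tau)\,z\bigr\|_2^2\,d\tau=0,
\end{equation*}
so $z^T\Phi_A(t_0,\tau)B(\tau)=0$ for a.e.\ $\tau\in[t_0,t_f]$, and by continuity for every such $\tau$. Taking $x_0:=z$, any continuous $u(\cdot)$ steering $x_0$ to $0$ would have to satisfy the reduced integral identity; pairing both sides with $z$ yields $-\|z\|_2^2 = \int_{t_0}^{t_f} z^T\Phi_A(t_0,\tau)B(\tau)u(\tau)\,d\tau = 0$, a contradiction.

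The only real obstacle I anticipate is bookkeeping with the transition-matrix identities and reconciling the notation of $W$ in the statement, which, as written, is most naturally read as having $\Phi_A(t_0,\tau)$ inside the integrand (otherwise the $\Phi_A(t_0,t_f)$ factors pull outside the integral and the reduction above collapses). Beyond that clerical point, the argument is the standard finite-rank-operator Gramian criterion transported to the infinite-dimensional control setting.
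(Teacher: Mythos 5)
Your argument is correct, but note that the paper does not prove this theorem at all: it is quoted verbatim from Rugh's \emph{Linear System Theory} (Theorem~9.2 there), so there is no internal proof to compare against. What you give is the standard textbook argument --- sufficiency via the explicit steering control $u(\tau)=-B^T(\tau)\Phi_A^T(t_0,\tau)W(t_0,t_f)^{-1}x_0$, necessity by contrapositive from $z^TW(t_0,t_f)z=0$ forcing $B^T(\tau)\Phi_A^T(t_0,\tau)z\equiv 0$ and hence an unreachable $x_0=z$ --- and both directions are carried out correctly. You are also right about the clerical point: as printed, the integrand shows $\Phi_A(t_0,t_f)$, a constant matrix that would factor out of the integral and reduce the criterion to invertibility of $\int_{t_0}^{t_f}B(t)B^T(t)\,dt$, which is false as a controllability test (the double integrator with constant $b=(0,1)^T$ already defeats it); the intended Gramian has $\Phi_A(t_0,t)$ inside the integral, which is the form your proof correctly uses.
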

The matrix $W(t_0,t_f)$ defined above is called the {\em controllability Gramian}, and in general it is not easy to compute, making Theorem~\ref{contr_Gram} rather impractical. Hence, we present next a computable version of this result. Suppose that $A(\cdot)$ and $B(\cdot)$ are not only continuous but also ``sufficiently" smooth.  Let
\begin{subequations}
	\begin{eqnarray}
		&& K_0(t) := B(t)\,, \label{K0}\\
		&& K_j(t) := -A(t)K_{j-1}(t) + \dot{K}_{j-1}(t)\,,\ \ j = 1,2,\ldots\,. \label{Kj}
	\end{eqnarray}
\end{subequations}
With these definitions, a (much more easily) computable version of Theorem~\ref{contr_Gram} can be given as follows.
\begin{theorem}[Controllability via a More Easily Computable Test Matrix] \label{contr_matrix} \ \\
{\bf(\!\!{\cite[Theorem~9.4]{Rugh1996}}, \cite{SilMea1967})} Suppose $q$ is a positive integer such that, on $[t_0,t_f]$, $B(t)$ is $q$-times continuously differentiable, and $A(t)$ is $(q-1)$-times continuously differentiable.  Then the system in~\eqref{contr_sys} is controllable on $[t_0,t_f]$ if for some $t_c\in[t_0,t_f]$,
\begin{equation}  \label{contr_matrix_rank}
	\rank \left[K_0(t_c)\ |\ K_1(t_c)\ |\ \cdots\ |\ K_q(t_c)\right] = n\,,
\end{equation}
with $K_j(t_c)$, $j = 1,\ldots,q$, computed using~\eqref{K0}--\eqref{Kj}, is invertible.
\end{theorem}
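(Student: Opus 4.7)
The plan is to argue by contrapositive using the Gramian characterization in Theorem~\ref{contr_Gram}. Suppose the system~\eqref{contr_sys} is \emph{not} controllable on $[t_0,t_f]$. Then the controllability Gramian $W(t_0,t_f)$ is singular, so there exists $\eta\in\dR^n\setminus\{0\}$ with $\eta^T W(t_0,t_f)\eta = 0$. Writing this quadratic form as an integral of a non-negative continuous function, and exploiting continuity of $B$ and of the state-transition matrix, we conclude that the row function
\[
\psi(t) := \eta^T\Phi_A(t_0,t)\,B(t) = \eta^T\Phi_A(t_0,t)\,K_0(t)
\]
vanishes identically on $[t_0,t_f]$. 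The goal is to show that this identity, together with the recursion~\eqref{K0}--\eqref{Kj}, forces the concatenated test matrix at \emph{every} $t_c\in[t_0,t_f]$ to have rank strictly less than $n$, contradicting~\eqref{contr_matrix_rank}.

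Next I would compute the successive derivatives of $\psi$. Using $\frac{\partial}{\partial t}\Phi_A(t_0,t) = -\Phi_A(t_0,t)A(t)$, which follows from differentiating $\Phi_A(t_0,t)\Phi_A(t,t_0)=\Id$ and using the defining ODE of the state-transition matrix, the smoothness hypotheses on $A(\cdot)$ and $B(\cdot)$ ensure that $\psi$ is at least $q$ times continuously differentiable on $[t_0,t_f]$. I would then show by induction on $j$ that, for $j = 0,1,\ldots,q$,
\[
\psi^{(j)}(t) \;=\; \eta^T\Phi_A(t_0,t)\,K_j(t),
\]
the inductive step being a direct application of the product rule and definition~\eqref{Kj}: differentiating $\eta^T\Phi_A(t_0,t)K_{j-1}(t)$ produces $\eta^T\Phi_A(t_0,t)\bigl(-A(t)K_{j-1}(t)+\dot K_{j-1}(t)\bigr)$, which is precisely $\eta^T\Phi_A(t_0,t)K_j(t)$.

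Since $\psi\equiv 0$ on $[t_0,t_f]$, all of its derivatives vanish there; evaluating at the given $t_c$ yields $\eta^T\Phi_A(t_0,t_c)K_j(t_c)=0$ for $j=0,1,\ldots,q$. Setting $\mu := \Phi_A^T(t_0,t_c)\eta$, which is nonzero because $\Phi_A(t_0,t_c)$ is invertible, one obtains
\[
\mu^T\bigl[K_0(t_c)\ |\ K_1(t_c)\ |\ \cdots\ |\ K_q(t_c)\bigr] = 0,
\]
so the test matrix has a nontrivial left null vector, contradicting the rank-$n$ hypothesis~\eqref{contr_matrix_rank}. This completes the contrapositive.

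The main obstacle I anticipate is the inductive differentiation step: careful bookkeeping is needed to confirm that the recursion~\eqref{K0}--\eqref{Kj} is tailored exactly to cancel the two terms produced by the product rule, and to verify that the assumed regularity of $A$ and $B$ is indeed the minimum needed so that $\psi^{(j)}$ exists and is continuous for all $j\le q$. A secondary, mostly cosmetic, point is ensuring that the passage from ``$\eta^T W(t_0,t_f)\eta=0$'' (an integral identity) to ``$\psi(t)\equiv 0$ pointwise'' is clean: this uses only that the integrand $\|\psi(t)\|_2^2$ is continuous and non-negative, so the continuity of $A$ and $B$ guaranteed at the outset suffices.
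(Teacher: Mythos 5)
Your argument is correct, but note that the paper itself offers no proof of Theorem~\ref{contr_matrix}: it is quoted verbatim from the literature (Rugh, Theorem~9.4, and Silverman--Meadows), so there is nothing internal to compare against. What you have written is essentially the standard textbook proof of the sufficiency of the rank test, and it hangs together: non-controllability plus the equivalence in Theorem~\ref{contr_Gram} gives a nonzero $\eta$ with $\eta^T W(t_0,t_f)\,\eta=0$; continuity and non-negativity of the integrand force $\psi(t)=\eta^T\Phi_A(t_0,t)B(t)\equiv 0$; the identity $\tfrac{\partial}{\partial t}\Phi_A(t_0,t)=-\Phi_A(t_0,t)A(t)$ together with the recursion \eqref{K0}--\eqref{Kj} gives $\psi^{(j)}(t)=\eta^T\Phi_A(t_0,t)K_j(t)$ by induction (the regularity hypotheses, $B\in C^q$ and $A\in C^{q-1}$, are exactly what is needed for $K_j\in C^{q-j}$ and $\psi\in C^q$); and invertibility of $\Phi_A(t_0,t_c)$ turns this into a nonzero left null vector of $\left[K_0(t_c)\,|\,\cdots\,|\,K_q(t_c)\right]$ at \emph{every} $t_c$, contradicting \eqref{contr_matrix_rank}. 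Two small remarks: you tacitly (and correctly) use the standard Gramian $\int_{t_0}^{t_f}\Phi_A(t_0,t)B(t)B^T(t)\Phi_A^T(t_0,t)\,dt$, whereas the display in Theorem~\ref{contr_Gram} as printed has $\Phi_A(t_0,t_f)$ inside the integral, an evident typo worth flagging rather than silently fixing; and your proof in fact yields the slightly stronger conclusion that under non-controllability the test matrix is rank-deficient at every point of $[t_0,t_f]$, which is exactly the form of the statement used later in the proofs of Theorems~\ref{thm:gap&bbang} and~\ref{theo:crit}.
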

Checking~\eqref{contr_matrix_rank} is in general far easier than checking the invertibility of $W(t_0,t_f)$.
	
\noindent 
{\bf Component-wise Controllability.}\ \  
We call the control system in~\eqref{contr_sys} {\em controllable w.r.t.\ $u_i$} on $[t_0,t_f]$ if given any initial state $x(t_0) = x_0$ there exists a continuous $i$th component $u_i(\cdot)$ of the control $u(\cdot)$ such that the corresponding solution of
\begin{equation}  \label{contr_sys_ui}
	\dot{x}(t) = A(t)x(t) + b_i(t)u_i(t)
\end{equation}
satisfies $x(t_f) = 0$.  Then clearly Theorems~\ref{contr_Gram} and \ref{contr_matrix}, with $B(t)$ replaced by $b_i(t)$, hold for the system in~\eqref{contr_sys_ui}, as the component-wise definition of controllability is stronger than that for the more general definition we gave originally.

\subsection{Best approximation solution}

Next, we provide in a theorem the best approximation solution in the set ${\cal B}$, in the case when the constraint sets ${\cal A}$ and ${\cal B}$ are disjoint. 
\begin{theorem}[Gap Vector and the Best Approximation Control in \boldmath{${\cal B}$}]  \label{thm:gap&bbang}\ \ 
With the notation of Problem $(P_f)$, assume that ${\cal A} \cap {\cal B} = \emptyset$.  Then the optimal gap vector is given by $v(t) = -B^T(t)\lambda(t)$, for all $t\in[0,1]$, where $\lambda(\cdot)$ solves~\eqref{adjoint}.  Moreover, suppose that $A(\cdot)$ and $B(\cdot)$ are sufficiently smooth and that the control system~\eqref{contr_sys} is controllable w.r.t.\ $u_i$ on any $[t',t'']\subset[0,1]$, $t'<t''$, for some $i = 1,\ldots,m$.  Then, for a.e.\ $t\in[0,1]$,
\begin{equation}  \label{uB}
	u_{{\cal B},i}(t) = \left\{\begin{array}{rl}
	\overline{a}_i(t)\,, &\ \ \mbox{if\ \ } v_i(t) \ge 0\,, \\[1mm]
	\underline{a}_i(t)\,, &\ \ \mbox{if\ \ } v_i(t) < 0\,.
	\end{array} \right.
\end{equation} 
In other words, such $u_{{\cal B},i}$ is of bang--bang type.
\end{theorem}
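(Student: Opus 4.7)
\textbf{Proof plan for Theorem~\ref{thm:gap\&bbang}.}

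The first statement is essentially a direct application of the maximum principle as already laid out between \eqref{Hamiltonian} and \eqref{opt_v}. Since Lemma~\ref{lem:D2} guarantees existence of an optimal triple $(x,v,u_{\cal B})$ (with $v$ unique), the maximum principle applies and condition~\eqref{opt_v_gen} immediately rearranges to $v(t)=-B^T(t)\lambda(t)$, with $\lambda$ satisfying the adjoint equation~\eqref{adjoint}. So the first assertion of the theorem needs no further work beyond invoking these previously derived relations.

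The core of the proof is therefore the bang--bang conclusion, and I would prove it by contradiction using controllability w.r.t.\ $u_i$. Assume there is a subinterval $[t',t'']\subset[0,1]$ with $t'<t''$ on which $v_i(t)=0$ on a set of positive measure; by continuity of $v_i(t)=-b_i^T(t)\lambda(t)$ (which follows from the continuity of $b_i$, $A$, $B$ and $\lambda$), we may in fact assume $v_i\equiv 0$ on the whole of $[t',t'']$. Then I would differentiate the identity $v_i(t)=-b_i^T(t)\lambda(t)$ repeatedly on $(t',t'')$, using the adjoint equation $\dot\lambda=-A^T\lambda$ after each differentiation. A short induction will show
\[
\frac{d^j v_i}{dt^j}(t) \;=\; -\bigl(K_j^{(i)}(t)\bigr)^T \lambda(t), \qquad j=0,1,\ldots,q,
\]
where $K_0^{(i)}(t):=b_i(t)$ and $K_j^{(i)}(t):=-A(t)K_{j-1}^{(i)}(t)+\dot K_{j-1}^{(i)}(t)$, i.e.\ the componentwise analogues of \eqref{K0}--\eqref{Kj}. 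The smoothness assumption on $A$ and $B$ is exactly what justifies carrying these derivatives up to order~$q$.

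Since $v_i\equiv 0$ on $[t',t'']$, every derivative vanishes there, so $(K_j^{(i)}(t_c))^T\lambda(t_c)=0$ for each $j=0,1,\ldots,q$ at every $t_c\in(t',t'')$. Stacking these into a single matrix yields
\[
\bigl[K_0^{(i)}(t_c)\,|\,K_1^{(i)}(t_c)\,|\,\cdots\,|\,K_q^{(i)}(t_c)\bigr]^T \lambda(t_c)=0.
\]
By the componentwise version of Theorem~\ref{contr_matrix}, controllability w.r.t.\ $u_i$ on $[t',t'']$ guarantees that this matrix has rank~$n$ at some $t_c$, forcing $\lambda(t_c)={\bf 0}$. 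This contradicts the nontriviality clause of the maximum principle, which asserts $\lambda(t)\neq {\bf 0}$ for all $t\in[0,1]$.

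Consequently $v_i$ cannot vanish on any subinterval of positive measure, so the third line of \eqref{opt_uB} is active only on a measure-zero set, and on this set we may assign the value $\overline{a}_i(t)$ without changing the optimality of $u_{\cal B}$; this yields the stated dichotomy~\eqref{uB}. The main obstacle is really the differentiation--induction step and ensuring the smoothness assumption lets the chain go up to order $q$; everything else is assembly. I would spell out the induction carefully because the recursion \eqref{Kj} mixes $-A K_{j-1}$ with $\dot K_{j-1}$, which is exactly what comes out when one differentiates $b_i^T(t)\lambda(t)$ and substitutes $\dot\lambda=-A^T\lambda$.
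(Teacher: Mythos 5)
Your proposal follows essentially the same route as the paper's proof: the first assertion is read off from the maximum-principle condition \eqref{opt_v_gen}--\eqref{opt_v}, and the bang--bang claim is obtained by contradiction, repeatedly differentiating $v_i(t)=-b_i^T(t)\lambda(t)$ along the adjoint equation \eqref{adjoint}, recognizing the resulting vectors as the componentwise analogues of \eqref{K0}--\eqref{Kj}, and invoking the rank test of Theorem~\ref{contr_matrix} (componentwise) to force $\lambda(t_c)=\mathbf{0}$, contradicting the nontriviality of $\lambda$ (the paper additionally remarks that $\lambda(t_c)=0$ propagates via \eqref{adjoint} to $\lambda\equiv 0$ on $[0,1]$). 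One caveat: your opening reduction is misstated --- if $v_i$ vanishes merely on a set of positive measure inside $[t',t'']$, continuity does \emph{not} allow you to conclude $v_i\equiv 0$ on all of $[t',t'']$ (a continuous function can vanish on a fat Cantor set and on no interval); the paper sidesteps this by taking the contradiction hypothesis to be exactly the singular-arc situation of \eqref{opt_uB}, namely $v_i(t)=0$ for a.e.\ $t$ in some nontrivial subinterval, in which case continuity does yield $v_i\equiv 0$ there and the rest of your argument goes through verbatim.
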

\begin{proof}
With ${\cal A} \cap {\cal B} = \emptyset$, the optimal gap vector $v(\cdot)$ is simply given in~\eqref{opt_v} as part of the solution to (the always feasible) Problem~(Pf).  This establishes the first assertion. We proceed to prove the last assertion. For contradiction purposes, suppose that, for the index $i$ as in the hypothesis (i.e., verifying the controllability assumption), the solution is not (only) bang--bang. Using \eqref{opt_uB}, this means that $v_i(t) = 0$ for a.e.\ $t\in[t',t'']\subset[0,1]$, $t'<t''$.  Then the $k$th derivative of $v(\cdot)$ is also zero over this nontrivial interval. Namely, $v_i^{(k)}(t) = 0$, for a.e.\ $t\in(t',t'')$, and all $1\le k\le (n-1)$. Note that using~\eqref{adjoint} and \eqref{opt_vi} one has, for a.e.\ $t\in(t',t'')$,
\begin{subequations}
	\begin{eqnarray}
		&& v_i(t) = -\lambda^T(t)\,b_i(t) = 0\,,  \label{vi_0} \\
		&& \dot{v}_i(t) = -\dot{\lambda}^T(t)\,b_i(t) -\lambda^T(t)\,\dot{b}_i(t) = \lambda^T(t)\left(A(t)\,b_i(t) - \dot{b}_i(t)\right) = 0\,, \\
		&& \ddot{v}_i(t) = \dot{\lambda}^T(t)\left(A(t)\,b_i(t) - \dot{b}_i(t)\right) + \lambda^T(t)\left(\dot{A}(t)\,b_i(t) + A(t)\,\dot{b}_i(t) - \ddot{b}_i(t)\right) \nonumber \\
		&& \hspace*{8.5mm} = \lambda^T(t)\left(-A^2(t)\,b_i(t) + 2\,A(t)\,\dot{b}_i(t) + \dot{A}(t)\,b_i(t) - \ddot{b}_i(t) \right) = 0\,,  \label{vi_3} \\
		&& \mbox{and so on.} \nonumber
	\end{eqnarray}
\end{subequations}
Let $p_{i,0}(t) := b_i(t)$.  Equations~\eqref{vi_0}--\eqref{vi_3} can be rewritten as
\begin{subequations}
	\begin{eqnarray}
		v_i(t) &=& -\lambda^T(t)\,p_{i,0}(t) = 0\,,  \label{vi_1} \\
		v_i^{(k)}(t) &=& -\lambda^T(t)\,p_{i,k}(t) = 0\,,\ \ k = 1,2,\ldots\,,   \label{vi_k}
	\end{eqnarray}
\end{subequations}
where
\begin{subequations}
	\begin{eqnarray}
		&& p_{i,0}(t) := b_i(t)\,, \label{pi_0} \\
		&& p_{i,k}(t) := -A(t)\,p_{i,k-1}(t) + \dot{p}_{i,k-1}(t)\,,\ \ k = 1,2,\ldots\,. \label{pi_k}  \label{pi_k}
	\end{eqnarray}
\end{subequations}
Note that $p_{i,k}(t)$, $k = 1,2,\ldots$, are the same as $K_j(t)$, $k = 1,2,\ldots$, in~\eqref{K0}--\eqref{Kj}, but with $B(t)$ replaced by $b_i(t)$.  From \eqref{vi_1}--\eqref{vi_k}, one gets
\begin{equation}  \label{Qci}
	-\lambda^T(t)\,Q_c^i(t) = 0\,,
\end{equation}
where
\begin{equation}  \label{Qci_def}
	Q_c^i(t) := [\ p_{i,0}(t)\ |\  p_{i,1}(t)\ |\  \ldots\ |\ p_{i,n-1}(t)\ ]\,. 
\end{equation}
Suppose that the control system is controllable w.r.t.\ $u_i$ on $[t',t'']$.  Then by Theorem~\ref{contr_matrix} there exists some $t_c\in[t',t'']$ such that $\rank Q_c^i(t_c) = n$.  This implies from~\eqref{Qci} that $\lambda(t_c) = 0$, and that in turn implies by the ODE in~\eqref{adjoint} that $\lambda(t) = 0$ for all $t\in[0,1]$, which is not allowed by the maximum principle.  Therefore one cannot have that $v_i(t) = 0$ for a.e.\ $[t',t'']\subset[0,1]$, as a result giving rise to~\eqref{uB}.
\end{proof}
	
\begin{remark}[The Best Approximation Control in \boldmath{${\cal A}$}] \rm
Consider the expressions for the $i$th component of the optimal gap vector $v(\cdot)$ and the $i$th component of the best approximation control $u_{{\cal B}}(\cdot)$, given as in \eqref{gap_v} and \eqref{uB}, respectively.  One can then simply express the $i$th component of the best approximation control in the affine set ${\cal A}$ as
\begin{equation}  \label{uA}
	u_{{\cal A},i}(t) = \left\{\begin{array}{rl}
	\overline{a}_i(t) + v_i(t)\,, &\ \ \mbox{if\ \ } v_i(t) \ge 0\,, \\[1mm]
	\underline{a}_i(t) + v_i(t)\,, &\ \ \mbox{if\ \ } v_i(t) < 0\,,
	\end{array} \right.
\end{equation}
for a.e.\ $t\in[0,1]$.  We observe that while $v_i(\cdot)$ as given in~\eqref{opt_vi} is continuous, $u_{{\cal A},i}(\cdot)$ is piecewise continuous.
\proofbox
\end{remark}

\begin{remark}[Time-invariant Systems] \em
Suppose that the control system in~\eqref{contr_sys} is time-invariant; namely that $A(t) = A$ and $B(t) = B$, $A$ and $B$ constant matrices, for all $t\in[0,1]$.  This is a widely encountered case in control theory although the time-varying case is more general.  We note that, in~\eqref{pi_0}--\eqref{pi_k}, $\dot{p}_{i,k-1} = 0$ and so we can write
\[
	p_{i,k}(t) := (-1)^{k+1}\,A^k\,b_i\,,\ \ k = 0,1,\ldots,n-1\,.
 \]
Since $p_{i,k}(\cdot)$ is constant, write $Q_c^i := Q_c^i(t_c)$.  In turn the rank condition $\rank Q_c^i = n$ can explicitly be stated as
\begin{equation}  \label{Kalman}
	\rank\, [\ b_i\ |\  A\,b_i\ |\  \ldots\ |\ A^{n-1}\,b_i\ ] = n\,.
\end{equation}
The condition in~\eqref{Kalman} for time-invariant control systems is referred to as the {\em Kalman controllability rank condition}~\cite{Rugh1996} in control theory.  In conclusion, for invariant systems, if the rank condition in \eqref{Kalman} holds then the control component $u_{{\cal B},i}$ for the infeasible optimal control problem is of bang--bang type as given in \eqref{uB}.
\proofbox
\end{remark}

\section{Critical Feasibility}
\label{sec:crit_feas}
	
Suppose that $\overline{a}_i(t) = a > 0$ and $\underline{a}_i(t) = -a$ for all $t\in[0,1]$ and $i = 1,\ldots,m$.  Since it is assumed that ${\cal A} \neq \emptyset$, if $a = \infty$  or large enough, the optimal control problem given in~\eqref{obj_fun}--\eqref{bounds} is feasible, i.e., ${\cal A} \cap {\cal B} \neq \emptyset$.  By the same token, if $a$ is small enough, the problem is infeasible for some specified initial and terminal end states, i.e., ${\cal A} \cap {\cal B} = \emptyset$.  In fact, from the geometry of the sets ${\cal A}$ and ${\cal B}_a$, where ${\cal B}_a$ indicates the explicit dependence of ${\cal B}$ on $a$, there exists a {\em critical bound} $a_c$ such that for all $a < a_c$, ${\cal A} \cap {\cal B}_a = \emptyset$, since ${\cal B}_a$ is strictly contained by (or strictly smaller than) ${\cal B}_{a_c}$.  By this definition, when $a=a_c$ we say that the problem is {\em critically feasible}.
	
We are interested in knowing when a problem becomes critically feasible.  In other words, we want to find the smallest value $a_c$ of $a$ for which the problem is feasible.  We can pose this question as a new (parametric) optimal control problem, where the {\em parameter} $a$ is to be minimized subject to the constraint sets ${\cal A}$ and ${\cal B}_a$:
\[
\mbox{(Pcf) }\left\{\begin{array}{rl}
	\ds\min_{u(\cdot),\,a} & \ \ a \\[1mm] 
	\mbox{subject to} & \ \ \ds\dot{x}(t) = A(t)x(t) + \sum_{i=1}^m b_i(t)u_i(t)\,,\ \ \mbox{for a.e.\ } t\in[0,1]\,, \\[2mm]
	& \ \ \varphi(x(0),x(1)) = 0\,, \\[2mm]
	& \ \ |u_i(t)| \le a\,,\ \ \mbox{for a.e.\ } t\in[0,1]\,,\ \ i = 1,\ldots,m\,,
\end{array} \right.
\]
the optimal value of which will be $a_c$.  

\begin{remark} \rm
We observe that $|u_i(t)| \le a$,  $i = 1,\ldots,m$, can be written as $\|u(t)\|_{\infty}\le a$, where $\|\cdot\|_{\infty}$ is the $\ell_{\infty}$-norm in $\dR^m$.  By also observing that the problem of ``{\em minimizing the value of the variable $a$ subject to $\|u(t)\|_{\infty}\le a$, for a.e.\ $t\in [0,1]$},'' is equivalent to ``{\em minimizing the $L^\infty$-norm of $u$},'' Problem~(Pcf) can be re-written as follows.
\[
\mbox{(Pcf1) }\left\{\begin{array}{rl}
	\ds\min & \ \ \|u\|_{L^\infty}  \\[1mm] 
	\mbox{subject to} & \ \ \ds\dot{x}(t) = A(t)x(t) + \sum_{i=1}^m b_i(t)u_i(t)\,,\ \ \mbox{for a.e.\ } t\in[0,1]\,, \\[2mm]
	& \ \ \varphi(x(0),x(1)) = 0\,.
\end{array} \right.
\]
It is interesting to note that Problem~(Pf1) is a generalized form of the problem studied in~\cite{KayNoa2013}.  In what follows we will use the procedure in~\cite{KayNoa2013}.
\proofbox
\end{remark}

Before we apply the maximum principle, it is convenient to re-write Problem~(Pcf) as an optimal control problem in standard (or classical) form.  First, we define a new state variable $y(t) := a$ and a new control variable
\begin{equation}  \label{def:v}
	w(t) := u(t)/a\,.  
\end{equation}  
Problem~(Pcf) can then be re-cast using these new variables as
\[
\mbox{(Pcf2) }\left\{\begin{array}{rl}
	\ds\min & \ \ y(1)  \\[1mm] 
	\mbox{subject to} & \ \ \ds\dot{x}(t) = A(t)x(t) + y(t)\sum_{i=1}^m b_i(t)\,w_i(t)\,,\ \ \mbox{for a.e.\ } t\in[0,1]\,, \\[2mm]
	& \ \ \varphi(x(0),x(1)) = 0\,, \\[2mm]
	& \ \ \dot{y}(t) = 0\,,  \ \ \ \ |w_i(t)|\le 1\,,\ \ \mbox{for a.e.}\ t\in [0,1]\,,\ \ i = 1,\ldots,m\,.
\end{array} \right.
\]
The Hamiltonian function $H:\dR^n \times \dR \times \dR^m \times \dR^n \times \dR \times [0,1] \to \dR$ for the critical feasibility problem (Pcf2) can be written as
\begin{equation}  \label{Hamiltonian_Pcf2}
	H(x(t),y(t),w(t),\lambda(t),\mu(t),t) := \left\langle\lambda(t), A(t)\,x(t) + y(t) \sum_{i=1}^m b_i(t)\,w_i(t)\right\rangle + \mu(t)\cdot 0\,,
\end{equation}
where $\lambda(t) := (\lambda_1(t),\ldots,\lambda_n(t))\in\dR^n$ is the {\em adjoint variable} solving~\eqref{adjoint}, and $\mu(t)$ is an additional adjoint variable such that
\[ 
\dot{\mu}(t) := -\frac{\partial H}{\partial y}(x(t), y(t), w(t), \lambda(t), \mu(t), t)\,,\]
so that
\begin{equation} \label{adjoint_mu} 
	\dot{\mu}(t) = -w^T(t)\,B^T(t)\,\lambda(t)\,,\quad \mu(0) = 0\,,\ \ \mu(1) = 1\,.
\end{equation} 
	
\subsection{\bf Maximum Principle for Problem~(Pcf2)}

Suppose that the triplet
\[
(x,y,w)\in W^{1,\infty}([0,1];\dR^n) \times W^{1,\infty}([0,1];\dR) \times L^\infty([0,1];\dR^m)
\]
is optimal for Problem~(Pcf2).  Then there exists a continuous adjoint variable vector $\lambda\in W^{1,\infty}([0,1];\dR^n)$ as defined in~\eqref{adjoint} and an additional continuous adjoint variable\linebreak $\mu\in W^{1,\infty}([0,1];\dR)$ as defined in~\eqref{adjoint_mu} such that $(\lambda(t),\mu(t))\neq{\bf 0}$ for all $t\in[0,1]$, and that, for a.e.\ $t\in[0,1]$,
\begin{equation}  \label{opt_w}
	w(t) = \argmin_{r_i\in[-1,1]} H(x(t), y(t), r_i, \lambda(t), \mu(t), t) = \argmin_{r_i\in[-1,1]} y(t)\,b_i^T(t)\lambda(t) r_i\,,
\end{equation}
for $i = 1,\ldots,m$.  Condition~\eqref{opt_w} results in
\begin{equation}  \label{opt_wi}
	w_i(t) = \left\{\begin{array}{rl}
	1\,, &\ \ \mbox{if\ \ } b_i^T(t)\lambda(t) < 0\,, \\[1mm]
	-1\,, &\ \ \mbox{if\ \ } b_i^T(t)\lambda(t) > 0\,, \\[1mm]
	\mbox{undetermined}\,, &\ \ \mbox{if\ \ } b_i^T(t)\lambda(t) = 0\,,
    \end{array} \right.
\end{equation}
for a.e.\ $t\in[0,1]$, $i = 1,\ldots,m$.

We show next that the solution of (Pcf) is bang--bang. Namely, there is no nontrivial subinterval of $[0,1]$ where $b_i^T(t)\lambda(t)$ vanishes almost everywhere.

\subsection{Solution to the critically feasible problem}

\begin{theorem}[Critically Feasible Control] 
\label{theo:crit}
Suppose that the system and control matrices $A(\cdot)$ and $B(\cdot)$ are sufficiently smooth.  Assume that, for some $i = 1,\ldots,m$, the control system~\eqref{contr_sys} is controllable w.r.t.\ $u_i$ on any $[s',s'']\subset[0,1]$, $s'<s''$. Then, the $i$th component $u_i(\cdot)$ of the critically feasible control for the optimal control problem in~\eqref{obj_fun}--\eqref{bounds}, with $\overline{a}_i(t) = a_c$ and $\underline{a}_i(t) = -a_c$, is given as
\begin{equation}  \label{u_crit_feas}
	u_i(t) = \left\{\begin{array}{rl}
	a_c\,, &\ \ \mbox{if\ \ } b_i^T(t)\lambda(t) \le 0\,, \\[1mm]
	-a_c\,, &\ \ \mbox{if\ \ } b_i^T(t)\lambda(t) > 0\,,
	\end{array} \right.
\end{equation}
for a.e.\ $t\in[0,1]$, where $\lambda(\cdot)$ solves~\eqref{adjoint}.  In other words, such $u_i$ is of bang--bang type.

\end{theorem}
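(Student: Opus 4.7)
The plan is to mirror the structure of the proof of Theorem~\ref{thm:gap&bbang}, since the optimality condition \eqref{opt_wi} already delivers the stated bang--bang form on the set where the switching function $s_i(t) := b_i^T(t)\lambda(t)$ is nonzero; the only obstacle is to rule out the singular possibility that $s_i$ vanishes on a nontrivial subinterval. I would proceed by contradiction: assume that $s_i(t) = 0$ for a.e.\ $t \in [t',t''] \subset [0,1]$ with $t' < t''$, and aim to contradict the non-triviality assertion of the maximum principle for Problem~(Pcf2).

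Under the smoothness hypotheses on $A(\cdot)$ and $B(\cdot)$ and the linearity of \eqref{adjoint}, the map $s_i$ is smooth on $[t',t'']$, so every derivative $s_i^{(k)}$ also vanishes there. Starting from $s_i = \lambda^T p_{i,0}$ with $p_{i,0} := b_i$ and substituting $\dot{\lambda} = -A^T \lambda$ recursively reproduces the exact calculation \eqref{vi_0}--\eqref{vi_3} carried out in Theorem~\ref{thm:gap&bbang}, yielding $s_i^{(k)}(t) = \lambda^T(t)\, p_{i,k}(t)$ with $p_{i,k}$ generated by the recursion \eqref{pi_0}--\eqref{pi_k}. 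Collecting $k = 0,1,\ldots,n-1$ gives $\lambda^T(t)\, Q_c^i(t) = 0$ on $[t',t'']$, with $Q_c^i$ as in \eqref{Qci_def}. The component-wise controllability hypothesis and Theorem~\ref{contr_matrix} then supply some $t_c \in [t',t'']$ at which $\rank Q_c^i(t_c) = n$, which forces $\lambda(t_c) = 0$; the linearity of the adjoint ODE propagates this to $\lambda \equiv 0$ on $[0,1]$.

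This is the step at which the present proof genuinely departs from that of Theorem~\ref{thm:gap&bbang}, and I expect it to be the main subtlety: the maximum principle for Problem~(Pcf2) asserts non-triviality of the \emph{pair} $(\lambda(t),\mu(t))$, not of $\lambda(t)$ alone, so $\lambda \equiv 0$ does not by itself deliver a contradiction. To close the argument, I would feed $\lambda \equiv 0$ into the second adjoint equation \eqref{adjoint_mu}, obtaining $\dot{\mu} \equiv 0$ and hence $\mu$ constant on $[0,1]$. The transversality conditions in \eqref{adjoint_mu} require $\mu(0) = 0$ and $\mu(1) = 1$, which is incompatible with $\mu$ being constant; this contradiction rules out the singular arc. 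Consequently $s_i$ cannot vanish on any subinterval of positive length, and the remaining (null) zero set can be absorbed into the $s_i \le 0$ branch of \eqref{opt_wi}, delivering the stated bang--bang expression \eqref{u_crit_feas} after the change of variables $u_i = a_c w_i$.
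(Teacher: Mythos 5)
Your proposal is correct and follows essentially the same route as the paper: rule out a singular arc by repeated differentiation of $b_i^T(t)\lambda(t)$, invoke component-wise controllability via Theorem~\ref{contr_matrix} to force $\lambda \equiv 0$, and then obtain the contradiction from \eqref{adjoint_mu}, since $\dot{\mu}\equiv 0$ with $\mu(0)=0$ is incompatible with $\mu(1)=1$. The subtlety you flag — that the nontriviality in the maximum principle concerns the pair $(\lambda,\mu)$, so the contradiction must pass through $\mu$ — is exactly how the paper closes its argument.
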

\begin{proof}
Suppose that the control system~\eqref{contr_sys} is controllable w.r.t.\ $u_i$ on any $[s',s'']\subset[0,1]$, $s'<s''$.  For contradiction purposes, suppose that $w_i$ in~\eqref{opt_wi} is singular, i.e., $\lambda^T(t)\,b_i(t) = 0$ for all $t\in[t',t'']\subset[0,1]$, with $t' < t''$.  Then, as in the proof of Theorem~\ref{thm:gap&bbang}, the consecutive time-derivatives of $\lambda^T(t)\,b_i(t)$ will also equal to zero for all $t\in[t',t'']$.  Defining $p_{i,0}(t) := b_i(t)$ and $p_{i,k}(t) := -A(t)\,p_{i,k-1}(t) + \dot{p}_{i,k-1}(t)$ as before, for $k = 1,2,\ldots\,n-1$, one similarly gets~\eqref{Qci} (with a sign difference) and \eqref{Qci_def}.  Then by means of the same arguments using the controllability of~\eqref{contr_sys}, as in the proof of Theorem~\ref{thm:gap&bbang}, $\lambda(t) = 0$ for all $t\in[0,1]$.  Then the differential equation in~\eqref{adjoint_mu} with the initial condition $\mu(0) = 0$ in~\eqref{adjoint_mu} yields $\mu(t) = 0$ for all $t\in[0,1]$, contradicting the terminal condition $\mu(1) = 1$ in~\eqref{adjoint_mu}, and thus furnishing the theorem. \end{proof}
	
\begin{remark} \rm
We note that, in the critically feasible case, $u_{\cal A} = u_{\cal B} = u_i$ and so $v_i = 0$, for all $i = 1,\ldots,m$, and thus $v_i$ does not serve as the switching function for $u_i$.
\proofbox
\end{remark}

\subsection{A double integrator problem}
\label{sec:DI}

References~\cite{BauBurKay2019, BurCalKayMou2023} studied  applications of splitting and projection methods to the feasible problem of finding the so-called {\em minimum-energy control of the double integrator},  the problem stated as
	\[
	\mbox{(PDI) }\left\{\begin{array}{rl}
		\ds\min & \ \ \ds\frac{1}{2}\int_0^1 u^2(t)\,dt =  \frac{1}{2}\,\|u\|_{L^2}^2 \\[5mm] 
		\mbox{subject to} & \ \ \dot{x}_1(t) = x_2(t)\,,\ \ x_1(0) = s_0\,,\ \ x_1(1) = s_f\,, \\[2mm]
		& \ \ \dot{x}_2(t) = u(t)\,,\ \ \ \,x_2(0) = v_0\,,\ \ x_2(1) = v_f\,, \\[2mm]
		& \ \ |u(t)|\le a\,,\ \ \mbox{for all}\ \ t\in [0,1]\,.
	\end{array} \right.
	\]	
Although Problem~(PDI) constitutes a relatively simple instance of an optimal control problem, a solution to it can only be found numerically.  This is the first reason why we find it interesting.  Secondly, (PDI) acts as a building block in, for example, the problem of finding cubic spline interpolants with constrained acceleration, an active area of research in numerical analysis and approximation theory.  A much wider range of optimal control problems involving the double integrator have been studied in the relatively recent book~\cite{Locatelli2017}, however it does not include Problem~(PDI).  Problem~(PDI) is simple and yet rich enough to study when introducing and illustrating many basic and new concepts or when testing new numerical approaches in optimal control---see, in addition to~\cite{BauBurKay2019, BurCalKayMou2023, Locatelli2017}, also~\cite{BurKayLiu2023, Kaya2020}.
	
With a large enough $a$ (so that the constraint $|u(t)|\le a$ never becomes active for any $t\in[0,1]$), Problem~(PDI) can be solved analytically to find a cubic curve $x_1(t)$, satisfying the initial point and velocity $s_0$ and $v_0$, and the terminal point and velocity $s_f$ and $v_f$, respectively -- see~\cite{BauBurKay2019} for the working of such an unconstrained solution.  A small enough $a$, on the other hand, restricts the values the function $u$ can take and thus rules out finding an analytical solution and necessitates the use of a numerical procedure for finding an approximate solution.  This altogether furnishes a testimony to the practical significance of such a simple looking problem like~(PDI). \\

\noindent
{\bf Critical Bound \boldmath{$a_c$}\,: Going From Feasible to Infeasible.}\ \ 
For the numerical experiments in~\cite{BauBurKay2019}, the special case when $s_0 = s_f = v_f = 0$ and $v_0 = 1$ was considered.  The feasible optimal control for this instance of Problem~(PDI) is 
\begin{equation}  \label{optcont_spec}
	u(t) = \left\{\begin{array}{rl}
	-a\,, &\ \ \mbox{if\ \ } 0\le t < t_1\,, \\[1mm]
	\ds\frac{2a}{t_2 - t_1}\,(t - t_1) - a\,, &\ \ \mbox{if\ \ } t_1\le t < t_2\,, \\[1mm]
	a\,, &\ \ \mbox{if\ \ } t_2\le t \le 1\,,
	\end{array} \right.
\end{equation}
where $0 \le t_1 < t_2 \le 1$ are the so-called {\em junction times}. 
When the value of $a$ is too small, problem (PDI) becomes inconsistent. Namely, there exists a critical value $a_c>0$ such that, when $a < a_c$ Problem~(PDI) is infeasible.
Thus, the control constraint will be active when $a\in [a_c, 4)$. The latter is the consistent, or the feasible, case, for which the control solution $u$ is still active.    If $a=4$ or larger, then $t_1 = 0$ and $t_2 = 1$ and the solution is the same as that of the case when $u$ is unconstrained. In other words, when $a\ge 4$ the bound constraint on $u$ becomes superfluous. When $a\in (a_c,4)$, the solution $u$ of problem (P) given in \eqref{optcont_spec} is continuous over the time horizon $[0,1]$. On the other hand, when $a=a_c$, as has been stated in Theorem~\ref{theo:crit} that the control solution has to be of bang--bang type, or discontinuous. In Remark~2.1 of~\cite{BauBurKay2019}, it is observed that
\begin{equation}  \label{observation1}
	2.414 < a_c < 2.415\,,
\end{equation}
based on the numerical experiments conducted, without elaborating further.  It is also reported in the same remark that when $a = a_c$ the unique feasible solution appears to be bang--bang, i.e., in particular, $u(t)$ switches once from $-a_c$ to $a_c$ at  the {\em switching time}
\begin{equation}  \label{observation2}
	t = t_c \approx 0.71\,,
\end{equation}
confirming our statement above that the optimal control $u$ in this case is discontinuous. \\
	
\noindent
{\bf How to Find the Solution for \boldmath{$a_c$} and \boldmath{$t_c$}.}\ \  
The {\em Hamiltonian function} $H:\dR^3 \times \dR \times \dR^3 \to \dR$ for Problem~(Pcf2) emanating from Problem~(PDI) is
\[
	H((x_1(t),x_2(t),y(t)),w(t),(\lambda_1(t),\lambda_2(t),\mu(t))) := \lambda_1(t)\,x_2(t) + \lambda_2(t)\,y(t)\,w(t) + \mu(t)\cdot 0\,,
\]
where $(\lambda_1(t),\lambda_2(t),\mu(t))\in\dR^3$ is the {\em adjoint variable} (or {\em costate}) {\em vector} such that
\[
	\dot{\lambda}_1(t) = -\frac{\partial H}{\partial x_1} = 0\,,\quad
	\dot{\lambda}_2(t) = -\frac{\partial H}{\partial x_2} = -\lambda_1(t)\,,
 \quad\mbox{and}
 \quad
	\dot{\mu} = -\frac{\partial H}{\partial y} = -\lambda_2(t)\,w(t)\,,
\] 
with the transversality conditions
\begin{equation} \label{transversality1}
	\mu(0) = 0\quad\mbox{and}\quad \mu(1) = 1\,.
\end{equation}
This leads to the solutions
\begin{equation} \label{adjoint_sol} 
	\lambda_1(t) = c_1\,,\quad \lambda_2(t) = -c_1\,t - c_2\,,
\end{equation} 
where $c_1$ and $c_2$ are unknown real constants.  
 	
The following is a straightforward corollary to Theorem~\ref{theo:crit} for the double integrator problem.	
\begin{corollary}[Critically Feasible Control] \label{cor:crit_DI}
The critically feasible optimal control $u_c$ for the double integrator problem is of bang--bang type with at most one switching; namely
\begin{equation}  \label{u_crit}
	u_c(t) = \left\{\begin{array}{rl}
	a_c\,, &\ \ \mbox{if\ \ } 0\le t < t_c\,, \\[1mm]
	-a_c\,, &\ \ \mbox{if\ \ } t_c\le t \le 1\,,
    \end{array} \right.
\end{equation}
where $t_c$ is the switching time.
\end{corollary}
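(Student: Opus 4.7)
The plan is to specialise Theorem~\ref{theo:crit} to Problem~(PDI) after verifying its controllability hypothesis, and then read off ``at most one switch'' directly from the affine form of the adjoint trajectory already exhibited in \eqref{adjoint_sol}. To check the hypothesis, note that (PDI) corresponds to the time--invariant system with
\[
A = \begin{pmatrix} 0 & 1 \\ 0 & 0 \end{pmatrix}, \qquad b = \begin{pmatrix} 0 \\ 1 \end{pmatrix},
\]
and a single scalar control. The Kalman rank matrix $[\,b \mid Ab\,] = \left(\begin{smallmatrix} 0 & 1 \\ 1 & 0 \end{smallmatrix}\right)$ has rank $2 = n$, so by the time--invariant form of Theorem~\ref{contr_matrix} the system is controllable with respect to $u$ on every nontrivial subinterval $[s',s''] \subset [0,1]$. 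Theorem~\ref{theo:crit} then yields $u_c(t) \in \{-a_c,\,+a_c\}$ almost everywhere, with the switch governed by the sign of $b^T \lambda(t) = \lambda_2(t)$.

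Next, I would exploit \eqref{adjoint_sol}, which gives $\lambda_2(t) = -c_1 t - c_2$, an affine function of $t$. On $[0,1]$ such a function either vanishes identically, has a single zero, or has no zero at all. The first alternative forces $\lambda \equiv 0$, which inserted into \eqref{adjoint_mu} together with $\mu(0)=0$ yields $\mu \equiv 0$, contradicting $\mu(1)=1$ -- this is exactly the contradiction used at the end of the proof of Theorem~\ref{theo:crit}. In the two remaining alternatives, $\sgn(\lambda_2)$ changes at most once on $[0,1]$, and consequently $u_c$ switches at most once; labelling the (possibly trivial) sign--change time by $t_c$ produces the two--piece profile of \eqref{u_crit}.

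The only remaining ingredient is the specific sign ordering in \eqref{u_crit}, namely $+a_c$ before $-a_c$. I would pin this down by plugging the bang--bang ansatz into the dynamics $\dot{x}_1 = x_2$, $\dot{x}_2 = u_c$, integrating to obtain $x_1(1)$ and $x_2(1)$ as explicit polynomials in $(a_c, t_c)$, and imposing the terminal conditions $(x_1(1), x_2(1)) = (s_f, v_f)$: only one of the two possible orderings admits a solution with $t_c \in (0,1)$ and positive $a_c$, and that is the ordering stated in the corollary. I expect this last sign--determination step to be the main obstacle, because the answer depends on the prescribed endpoints; one must rule out the opposite ordering uniformly -- either as infeasible for every admissible bound or as feasible only for bounds strictly exceeding $a_c$, hence not critical -- and then check consistency with the numerical values $a_c \approx 2.414$, $t_c \approx 0.71$ reported in \eqref{observation1}--\eqref{observation2}.
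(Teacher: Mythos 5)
Your first two paragraphs are exactly the paper's argument, only spelled out in more detail: the paper's proof consists of citing the formula \eqref{u_crit_feas} of Theorem~\ref{theo:crit} together with the observation that $b^T\lambda(t)=\lambda_2(t)=-c_1t-c_2$ in \eqref{adjoint_sol} is affine and hence changes sign at most once. Your Kalman-rank verification $\rank[\,b\mid Ab\,]=2$ correctly supplies the controllability hypothesis of Theorem~\ref{theo:crit} on every nontrivial subinterval, and your exclusion of $\lambda_2\equiv 0$ via $\mu(0)=0$, $\mu(1)=1$ in \eqref{adjoint_mu} is the same contradiction the paper uses (inside the proof of Theorem~\ref{theo:crit}, and again when Theorem~\ref{control_crit} rules out $c_1=c_2=0$). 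Up to this point the proposal is correct and matches the paper.

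The final ``sign-determination'' step, however, is a misstep: the specific ordering $+a_c$ before $-a_c$ is not a provable feature of the problem, and the claim that only this ordering is compatible with the terminal conditions is false in general. Which bound comes first is decided by $\sgn(\lambda_2(0))=\sgn(-c_2)$ and hence by the boundary data $(s_0,v_0,s_f,v_f)$; indeed, in the paper's own worked instance ($s_0=s_f=v_f=0$, $v_0=1$, Remark~\ref{control_crit_ex}) the critical control \eqref{u_crit_ex} equals $-(1+\sqrt{2})$ on $[0,1/\sqrt{2})$ and $+(1+\sqrt{2})$ afterwards --- the opposite of the pattern displayed in \eqref{u_crit} --- and Theorem~\ref{control_crit} accordingly writes the solution as $r$ then $-r$ with $r$ of either sign. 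The numerical values $a_c\approx 2.414$, $t_c\approx 0.71$ you propose to check against therefore correspond to the ordering you are trying to exclude. The display in \eqref{u_crit} should be read as a labelling convention (bang--bang with at most one switch, the two bang values being $\pm a_c$), which is all the paper's proof establishes and all that is needed; attempting to rule out one ordering uniformly, as you suggest, cannot succeed, so that step should simply be dropped.
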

\begin{proof}
The lemma follows from the expression in~\eqref{u_crit_feas} in Theorem~\ref{theo:crit} and the linearity of $\lambda_2$ in~\eqref{adjoint_sol} (which implies that  $\lambda_2$ can change sign at most once).
\end{proof}

A similar line of proof with $a < a_c$ (the infeasible case) results in the following corollary to Theorem~\ref{thm:gap&bbang}.
\begin{corollary}[Best Approximation Control in \boldmath{${\cal B}$}] \label{cor: infeas_DI}
The best approximation optimal control $u_{\cal B}$ for the double integrator problem is of bang--bang type with at most one switching; namely
\begin{equation}  \label{u_B}
	u_{\cal B}(t) = \left\{\begin{array}{rl}
	a\,, &\ \ \mbox{if\ \ } 0\le t < t_s\,, \\[1mm]
	-a\,, &\ \ \mbox{if\ \ } t_s\le t \le 1\,,
    \end{array} \right.
\end{equation}
where $t_s$ is the switching time.
\end{corollary}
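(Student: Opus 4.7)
The plan is to mirror the reasoning that produced Corollary~\ref{cor:crit_DI}, simply replacing the critically feasible maximum principle (Theorem~\ref{theo:crit}) by the infeasible one (Theorem~\ref{thm:gap&bbang}), since we are now in the regime $a < a_c$ and (PDI) is inconsistent.

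First I would verify the controllability hypothesis required by Theorem~\ref{thm:gap&bbang}. In the double integrator the system matrices are the constant $A$ with $A_{12}=1$ and all other entries zero, and the single input column $b = (0,1)^T$, so $A b = (1,0)^T$ and the Kalman rank condition~\eqref{Kalman} reduces to $\rank [\,b \mid A b\,] = 2$, which is immediate. Hence the single-input system is controllable with respect to $u$ on every nontrivial subinterval of $[0,1]$, and Theorem~\ref{thm:gap&bbang} applies, giving that $u_{\cal B}$ is of bang--bang form~\eqref{uB} with switching function $v(t) = -b^T \lambda(t) = -\lambda_2(t)$, where $\lambda$ solves the adjoint equation~\eqref{adjoint}.

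Next I would use the fact that the adjoint system for the double integrator is exactly the one already computed in~\eqref{adjoint_sol}: $\dot{\lambda}_1 = 0$ and $\dot{\lambda}_2 = -\lambda_1$ yield $\lambda_1(t) = c_1$ and $\lambda_2(t) = -c_1 t - c_2$ for some real constants $c_1, c_2$. Therefore $v(t) = c_1 t + c_2$ is affine in $t$. Since it is not identically zero on any nontrivial subinterval (the singular case $v \equiv 0$ is precisely what is excluded inside the proof of Theorem~\ref{thm:gap&bbang}, as it forces $\lambda \equiv 0$, contradicting the nontriviality clause of the maximum principle), $v$ changes sign on $[0,1]$ at most once. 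Inserting this into~\eqref{uB} delivers the bang--bang form~\eqref{u_B} with at most one switching time $t_s$, where $t_s \in [0,1]$ accommodates the degenerate cases $t_s = 0$ (constant $-a$) and $t_s = 1$ (constant $a$).

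The only step worth flagging as a genuine obstacle is ensuring that the hypotheses of Theorem~\ref{thm:gap&bbang} actually hold for (PDI) in the regime of interest: the disjointness ${\cal A} \cap {\cal B} = \emptyset$ (which is the meaning of $a < a_c$) and the componentwise controllability on every subinterval. Once these are in place, the remainder of the argument is a one-line specialization: the adjoint is affine, so its sign pattern is affine, so the bang--bang control has at most one switch.
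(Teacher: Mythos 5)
Your proposal is correct and follows essentially the same route as the paper: the paper obtains this corollary by the ``similar line of proof'' to Corollary~\ref{cor:crit_DI}, i.e., apply the bang--bang formula~\eqref{uB} from Theorem~\ref{thm:gap&bbang} (controllability of the double integrator being immediate from the Kalman rank test) and use the linearity of $\lambda_2$ in~\eqref{adjoint_sol}, so that $v=-\lambda_2$ is affine and changes sign at most once. Your additional remarks on the degenerate cases $t_s=0,1$ and on the hypothesis ${\cal A}\cap{\cal B}=\emptyset$ are consistent with, and slightly more explicit than, the paper's argument.
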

	
The theorem we present below provides the full analytical solution to Problem~(PDI) when $a = a_c$.
	
\begin{theorem}[Full Critically Feasible Solution to Problem~(PDI)] \label{control_crit} \ 
\begin{enumerate}
\item[(a)]  If $s_f - s_0 \neq (v_0 + v_f)/2$, then the critical control is given by
\begin{equation}  \label{u_crit1}
	u_c(t) = \left\{\begin{array}{rl}
	r\,, &\ \ \mbox{if\ \ } 0\le t < t_c\,, \\[1mm]
	-r\,, &\ \ \mbox{if\ \ } t_c\le t \le 1\,,
	\end{array} \right.
\end{equation}
and
\begin{equation}  \label{a_crit1}
	a_c = |r|\,,
\end{equation}
with $r$ and the switching time $t_c$ given in the following two cases.
\begin{enumerate}
\item[(i)] $v_0 \neq v_f$:
\begin{equation}  \label{r1}
	r = \frac{v_f - v_0}{2\,t_c - 1}\,
\end{equation}
and $t_c$ solves the quadratic equation
\begin{equation}  \label{t_crit}
	(v_f - v_0)\,t_c^2 + 2\,(s_f - s_0 - v_f)\, t_c + \frac{1}{2}\,(v_0 + v_f) - (s_f - s_0) = 0\,.
\end{equation}
\item[(ii)] $v_0 = v_f$:
\begin{equation}  \label{r2}
	r = 4\,(s_f - s_0 - v_0)\quad\mbox{and}\quad t_c = \frac{1}{2}\,.
\end{equation}
\end{enumerate}
			
\item[(b)]  If $s_f - s_0 = (v_0 + v_f)/2$, then $t_c = 0$ or $1$\,.  Furthermore, the critical control is given by
\begin{equation}  \label{u_crit2}
	u_c(t) = \left\{\begin{array}{rl}
	v_f - v_0\,,\ \ \mbox{if\ \ } t_c = 1\,, \\[1mm]
	v_0 - v_f\,,\ \ \mbox{if\ \ } t_c = 0\,,
\end{array} \right.
\end{equation}
for all $t\in [0,1]$, and so
\begin{equation}  \label{a_crit2}
	a_c = |v_f - v_0|\,.
\end{equation}
\end{enumerate}
\end{theorem}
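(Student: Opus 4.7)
My plan is to leverage Corollary~\ref{cor:crit_DI}, which already establishes that the critical control $u_c$ is bang--bang with at most one switching. I would parametrize the ansatz as $u_c(t) = r$ on $[0, t_c)$ and $u_c(t) = -r$ on $[t_c, 1]$, with $r \in \dR$ and $|r| = a_c$, so the proof reduces to determining the pair $(r, t_c)$ from the four prescribed boundary conditions on $x_1$ and $x_2$.

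With $u_c$ piecewise constant, the first step is a direct integration: $x_2(\cdot)$ is piecewise linear and $x_1(\cdot)$ is piecewise quadratic, both expressible in closed form in terms of $r$, $t_c$, $s_0$, $v_0$. Imposing $x_2(1) = v_f$ and $x_1(1) = s_f$ yields the two algebraic relations
\begin{equation*}
	r\,(2 t_c - 1) = v_f - v_0, \qquad r\,\bigl(2 t_c - t_c^2 - \tfrac{1}{2}\bigr) = s_f - s_0 - v_0,
\end{equation*}
which, together with $a_c = |r|$, form the backbone of the argument.

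For part (a)(i) with $v_0 \neq v_f$, the first relation forces $2 t_c - 1 \neq 0$, so I would solve for $r$ (recovering~\eqref{r1}), substitute into the second relation, clear denominators, and collect terms; the quadratic in~\eqref{t_crit} should drop out cleanly. For part (a)(ii) with $v_0 = v_f$, the first relation gives $r = 0$ or $t_c = 1/2$; the alternative $r = 0$ is incompatible with the second relation under the case~(a) hypothesis $s_f - s_0 \neq (v_0+v_f)/2 = v_0$, so $t_c = 1/2$ must hold, and the second relation then forces $r = 4(s_f - s_0 - v_0)$, as stated in~\eqref{r2}.

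For part (b), substituting $s_f - s_0 = (v_0 + v_f)/2$ into the system and eliminating $r$ collapses it to $(v_f - v_0)\,t_c\,(t_c - 1) = 0$, forcing $t_c \in \{0, 1\}$ (with $v_f = v_0$ as a degenerate trivial subcase). In either endpoint case, the bang--bang ansatz reduces to a constant control on $[0,1]$ whose magnitude is $|v_f - v_0|$, giving both $a_c$ and the formulas in~\eqref{u_crit2}. I expect the main technical chore to be careful bookkeeping: collecting terms precisely so that the quadratic of~\eqref{t_crit} takes the exact form claimed, and handling the degenerate endpoint subcases $t_c \in \{0,1\}$ with the sign convention so that the stated constant controls emerge correctly.
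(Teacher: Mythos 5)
Your proposal is correct and follows essentially the same route as the paper: both exploit the one-switch bang--bang structure coming from Corollary~\ref{cor:crit_DI} (i.e., the affine adjoint component $\lambda_2$), integrate the piecewise-constant control through the double-integrator dynamics, and match the boundary data to obtain two algebraic relations in $(r,t_c)$ that are then solved case by case, yielding exactly \eqref{r1}--\eqref{t_crit}, \eqref{r2} and part (b). The only difference is organizational: the paper splits on the adjoint constants $(c_1,c_2)$ (the no-switch cases giving part (b), the interior-switch case giving part (a)) and matches the states by continuity at $t_c$, whereas you integrate forward to $t=1$ and split directly on the problem data, recovering part (b) by elimination---an equivalent computation with the same conclusions.
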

\begin{proof}
Recall from \eqref{adjoint_sol} that $\lambda_2(t) = -c_1\,t - c_2$, for all $t\in[0,1]$.  Also note that $c_1$ and $c_2$ cannot both be zero, otherwise, with $\mu(0) = 0$ in~\eqref{transversality1} and continuity of $\mu$, it leads to $\mu(t) = 0$ for all $t\in[0,1]$, a contradiction with the fact that $\mu(1) = 1$ in~\eqref{transversality1}. Therefore in the rest of the proof we examine three cases: \\[-3mm]
		
\centerline{(I) $c_1 \neq 0$ and $c_2 = 0$;\ \ \ (II) $c_1 = 0$ and $c_2 \neq 0$;\ \ \ and\ \ \ (III) $c_1 \neq 0$ and $c_2 \neq 0$.}
		
\noindent
Case~(I): Suppose that $c_2 = 0$.  Then $\lambda_2(t) = -c_1\,t$, $c_1 \neq 0$, and by~\eqref{u_crit} $u(t) = \sgn(c_1)\,a_c$, for all $t\in[0,1]$ (no switching).  By solving the state equations with this $u(t)$ substituted, one gets $x_2(t) = \sgn(c_1)\,a_c\,t + v_0$ and $x_1(t) = \sgn(c_1)\,a_c\,t^2/2 + v_0\,t + s_0$, and subsequently $x_2(1) = v_f = \sgn(c_1)\,a_c + v_0$ and $x_1(1) = s_f = \sgn(c_1)\,a_c/2 + v_0 + s_0$.  Now, from these solutions, $\sgn(c_1)\,a_c = v_f - v_0$, and thus $s_f = (v_f - v_0)/2 + v_0 + s_0$, resulting in $s_f - s_0 = (v_0 + v_f)/2$, which is nothing but the case in part~(b) of the theorem. Finally one gets $u(t) = \sgn(c_1)\,a_c = v_f - v_0$ and thus $a_c = |v_f - v_0|$ as required by \eqref{u_crit2} and~\eqref{a_crit2}.
		
\noindent
Case~(II): Suppose that $c_1 = 0$. Then $\lambda_2(t) = -c_2$, $c_2 \neq 0$, and by~\eqref{u_crit} $u(t) = \sgn(c_2)\,a_c$, for all $t\in[0,1]$ (no switching).  The rest of the arguments follows similarly to the case when $c_2 = 0$ above simply by replacing $c_1$ by $c_2$ in the expressions.  This case also corresponds to and proves part~(b) of the theorem.
		
\noindent
Case~(III): Finally suppose that $\lambda_2(t) = -c_1\,t - c_2$ with both $c_1 \neq 0$ and $c_2 \neq 0$.  Then by Corollary~\ref{cor:crit_DI}, observing that $\lambda_2(0) = -c_2$,
\begin{equation}  \label{u_crit3}
	u_c(t) = \left\{\begin{array}{rl}
	r\,, &\ \ \mbox{if\ \ } 0\le t < t_c\,, \\[1mm]
	-r\,, &\ \ \mbox{if\ \ } t_c\le t \le 1\,,
	\end{array} \right.
\end{equation}
where
\[
r = \sgn(c_2)\,a_c > 0\,,
\]
verifying~\eqref{u_crit1}.  Next substitute $u(t) = u_c(t)$ into the differential equations in Problem~(Pc).  The respective solutions of $\dot{x}_2(t) = r$ with $x_2(0) = v_0$, and $\dot{x}_1(t) = x_2(t)$ with $x_1(0) = s_0$, for $0\le t < t_c$, are simply
\[
x_2(t) = r\,t + v_0\quad\mbox{and}\quad x_1(t) = \frac{1}{2}\,r\,t^2 + v_0\,t + s_0\,.
\]
Furthermore the respective solutions of $\dot{x}_2(t) = -r$ with $x_2(1) = v_f$, and $\dot{x}_1(t) = x_2(t)$ with $x_1(1) = s_f$, for $t_c\le t < 1$, can be obtained as
\[
x_2(t) = r\,(1-t) + v_f\quad\mbox{and}\quad x_1(t) = -\frac{1}{2}\,r\,(1-t)^2 - v_f\,(1-t) + s_f\,.
\]
Since $x_i$ are continuous, $\lim_{t\to t_c^-} x_i(t) = \lim_{t\to t_c^+} x_i(t)$, $i = 1,2$.  In other words,
\begin{equation}  \label{eqn:x2}
	r\,t_c + v_0 = r\,(1-t_c) + v_f\,,
\end{equation}
\begin{equation}  \label{eqn:x1}
	\frac{1}{2}\,r\,t_c^2 + v_0\,t_c + s_0 = -\frac{1}{2}\,r\,(1-t_c)^2 - v_f\,(1-t_c) + s_f\,.
\end{equation}
Case~(III)(ii): Suppose that $v_0 = v_f$.  Then, since $r>0$, $t_c = 1/2$ is the unique solution.  Substitution of $t_c = 1/2$ and $v_0 = v_f$ into \eqref{eqn:x1} and re-arrangements yield $r = 4\,(s_f - s_0 - v_0)$, or $a_c = 4\,|s_f - s_0 - v_0|$, verifying~\eqref{r2}.
		
\noindent
Case~(III)(i): 
Suppose that $v_0 \neq v_f$. Then, Equation~\eqref{eqn:x2} results in
\begin{equation}
	r = \frac{v_f - v_0}{2\,t_c - 1}\,,
\end{equation}
verifying \eqref{r1}.  After algebraic manipulations and re-arranging, \eqref{eqn:x1} can be rewritten as
\[
r\,t_c^2 - (v_f - v_0 + r)\,t_c + \frac{1}{2}\,r + v_f - (s_f - s_0) = 0\,.
\]
Substituting the expression for $r$ in \eqref{r1} into the above equation and multiplying both sides by $(2\,t_c - 1)$ give
\[
(v_f - v_0)\,t_c^2 - (v_f - v_0)\,t_c\,(2\,t_c - 1) - (v_f - v_0)\,t_c + \frac{1}{2}\,(v_f - v_0) + [v_f - (s_f - s_0)]\,(2\,t_c - 1) = 0\,.
\]
Further algebraic manipulations reduce the above equation to~\eqref{t_crit}, as required.  The proof is complete.
\end{proof}
	
\begin{remark}  \label{control_crit_ex} \rm
Suppose that $s_0 = s_f = v_f = 0$ and $v_0 = 1$, as in the numerical example studied in~\cite{BauBurKay2019}.  Then one has the case Theorem~\ref{control_crit}$(a)(i)$: Equation~\eqref{t_crit} reduces to
\[
-t_c^2 + \frac{1}{2} = 0\,,
\]
yielding $t_c = 1/\sqrt{2}$.  Then using \eqref{r2}, one gets $r = -1/(\sqrt{2} - 1) = -(1 + \sqrt{2})$, or $a_c = 1 + \sqrt{2}$.  Finally, the optimal control can simply be written from~\eqref{u_crit1} as
\begin{equation}  \label{u_crit_ex}
	u_c(t) = \left\{\begin{array}{rl}
	-(1 + \sqrt{2})\,, &\ \ \mbox{if\ \ } 0\le t < 1/\sqrt{2}\,, \\[1mm]
	1 + \sqrt{2}\,, &\ \ \mbox{if\ \ } 1/\sqrt{2}\le t \le 1\,.
	\end{array} \right.
\end{equation}
The numerical observations made in~\cite{BauBurKay2019}, re-iterated in~\eqref{observation1}--\eqref{observation2}, agree with the result in~\eqref{u_crit_ex}: $a_c \approx 2.4142$ and $t_c \approx 0.7071$, correct to four dp. \\
\end{remark}

\section{Numerical Experiments}
\label{sec:num_exp}

For computations numerically solving the three problems in Sections~\ref{ex:DI}--\ref{ex:mctool}, we employ the AMPL--Ipopt computational suite: AMPL is an optimization modelling language~\cite{AMPL} and Ipopt is an Interior Point Optimization software~\cite{WacBie2006} (version 3.12.13 is used here).  The suit is commonly utilized to solve discretized optimal control problems.  We discretize the optimal control problems~(Pf) and (Pcf) using the Euler scheme, with the number of  time discretization nodes (or time partition points) set in most of the cases as $2000$.  The number of these nodes is increased (as reported in situ) only when $a_c$ (in the case of critically infeasible solution) needs to be reported with a higher accuracy.  The Euler scheme is more suitable than higher-order Runge--Kutta discretization for these problems as the solutions exhibit bang--bang types of control making the state variable solutions only of $C^0$ class of functions.  Numerical chatter is evident when a higher-order discretization scheme, such as the trapezoidal rule, is used.  With $2000$ grid points, the resulting large-scale finite-dimensional problems have about $6000$ variables and $4000$ constraints for the double integrator and the damped oscillator problems, and $16000$ variables and $4000$ constraints for the machine tool manipulator problem.  We set the tolerance {\tt tol} for Ipopt to $10^{-8}$ in all problems.  We note that AMPL can also be paired with other optimization software, such as Knitro~\cite{Knitro}, SNOPT~\cite{GilMurSau2005} or TANGO~\cite{AndBirMarSch2008,BirMar2014}.

All three example problems in Sections~\ref{ex:DI}--\ref{ex:mctool} have a single control variable and the constraint on the control is given as
\[
-a \le u(t) \le a\,,\ \ \mbox{a.e.}\  t\in[0,1]\,,
\]
where $a$ is a positive constant.  The optimality condition~\eqref{uB} can then be written for this particular case as
\begin{equation}  \label{ex:uB}
	u_{\cal B}(t) = \left\{\begin{array}{rl}
	a\,, &\ \ \mbox{if\ \ } v(t) \ge 0\,, \\[1mm]
	-a\,, &\ \ \mbox{if\ \ } v(t) < 0\,,
	\end{array} \right.
\end{equation} 
for a.e.\ $t\in[0,1]$.  We will conveniently verify the optimality of the numerical results using~\eqref{ex:uB}.

\subsection{Double integrator}
\label{ex:DI}

From the double integrator problem~(PDI) in Section~\ref{sec:DI}, one simply has
\[
A = \left[\begin{array}{cc}
0 &\ 1 \\
0 &\ 0
\end{array}\right],\quad
b = \left[\begin{array}{c}
0 \\
1
\end{array}\right],
\]
using the notation in Problem~(Pf) in Section~\ref{sec:best_approx} and Problem~(Pcf) in Section~\ref{sec:crit_feas}.  As in Remark~\ref{control_crit_ex}, we take $s_0 = s_f = v_f = 0$ and $v_0 = 1$.  In other words, the boundary conditions in $\varphi(x(0),x(1)) = 0$ are expressed as $x(0) = (0, 1)$ and $x(1) = (0, 0)$.

First of all, we establish that the double integrator control system is controllable since $\rank Q_c = \rank [b\ |\ Ab] = 2 = n$.

\begin{figure}[t!]
\centering
\begin{subfigure}{.5\textwidth}
\centering
\includegraphics[width=\textwidth]{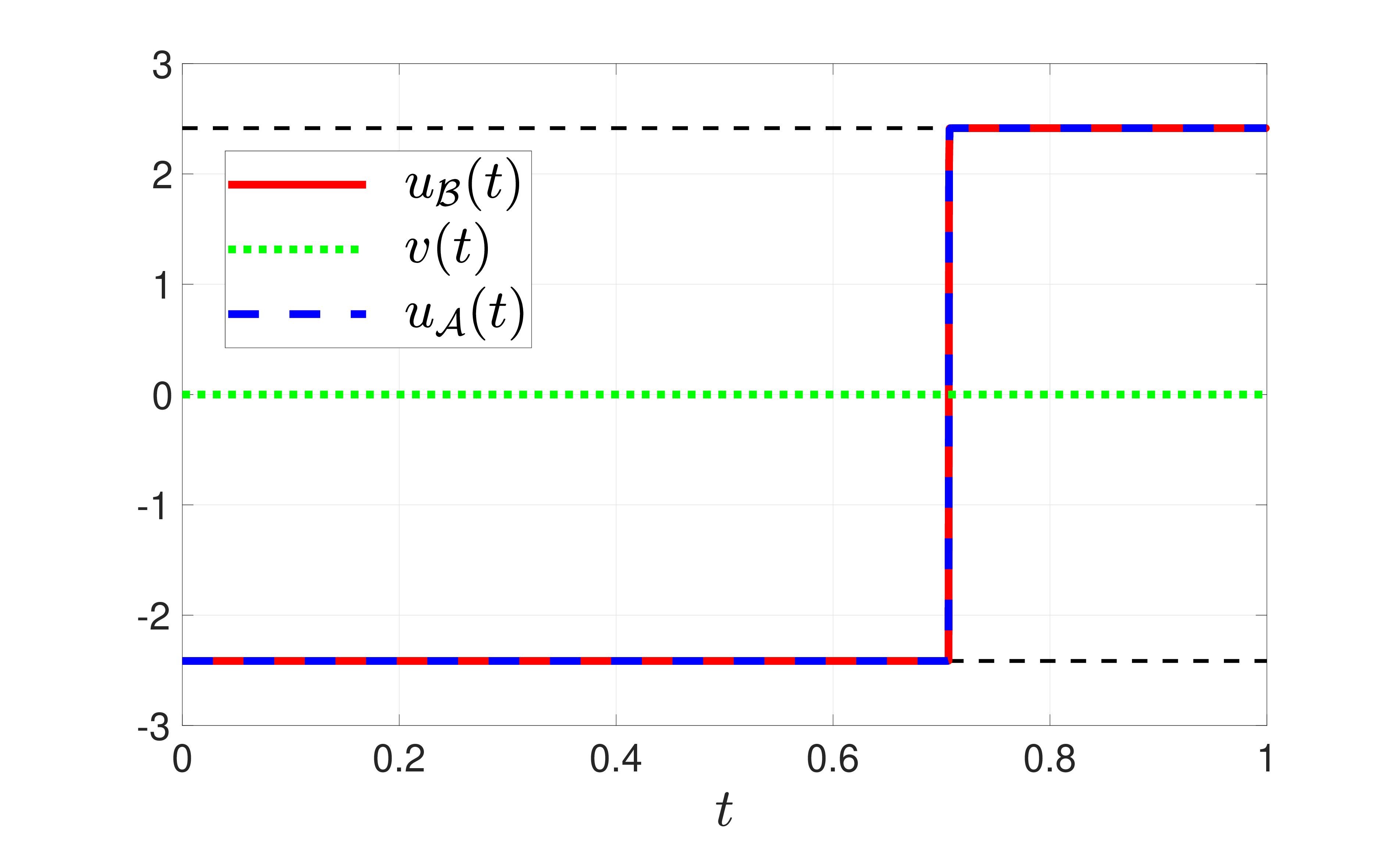}
\caption{$a = a_c \approx 2.414$}
\label{fig:a=2.414}
\end{subfigure}
\hfill
\begin{subfigure}{.49\textwidth}
\centering
\includegraphics[width=\textwidth]{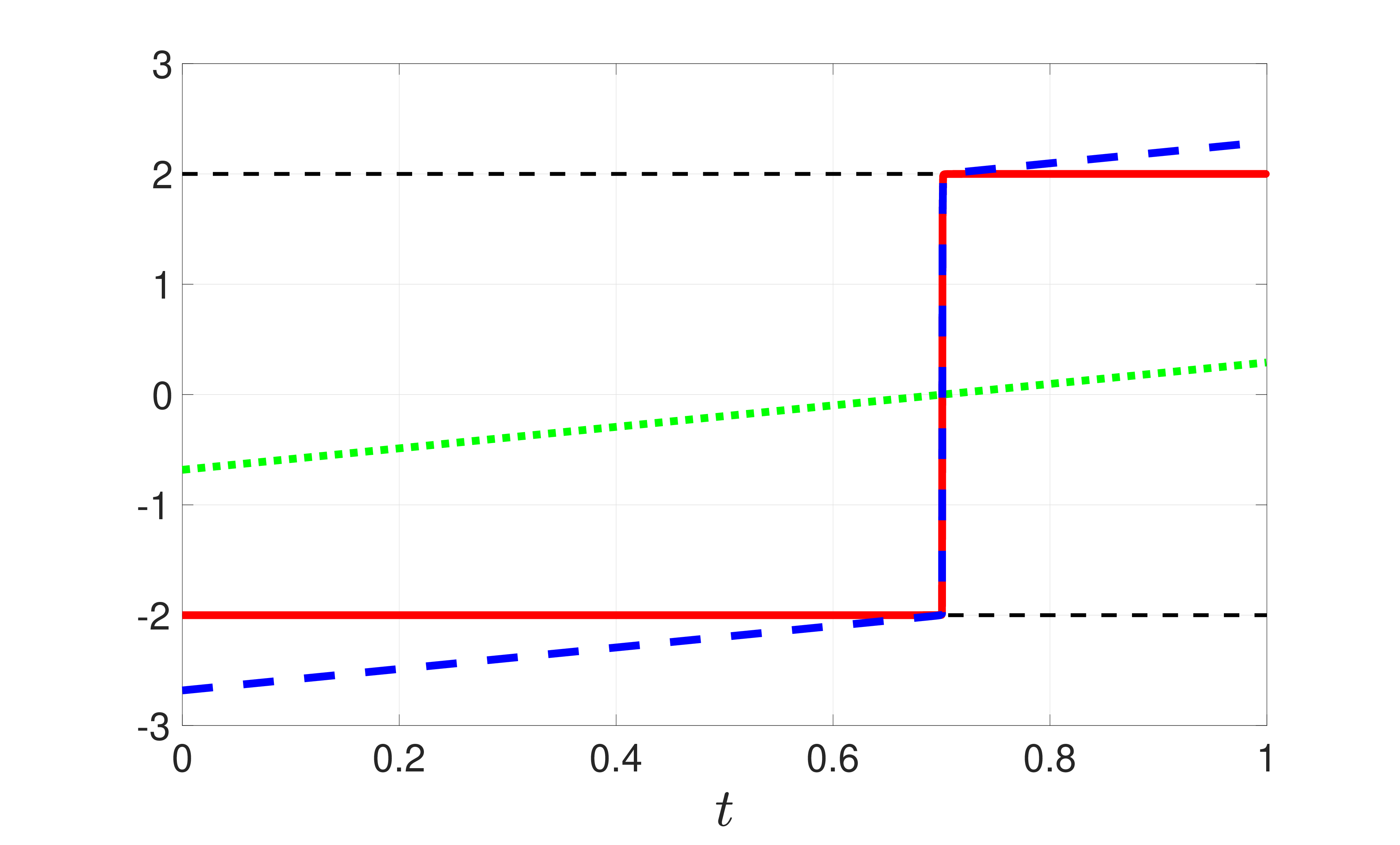}
\caption{$a = 2$}
\label{fig:a=2}
\end{subfigure}
\begin{subfigure}{.5\textwidth}
\centering
\includegraphics[width=\textwidth]{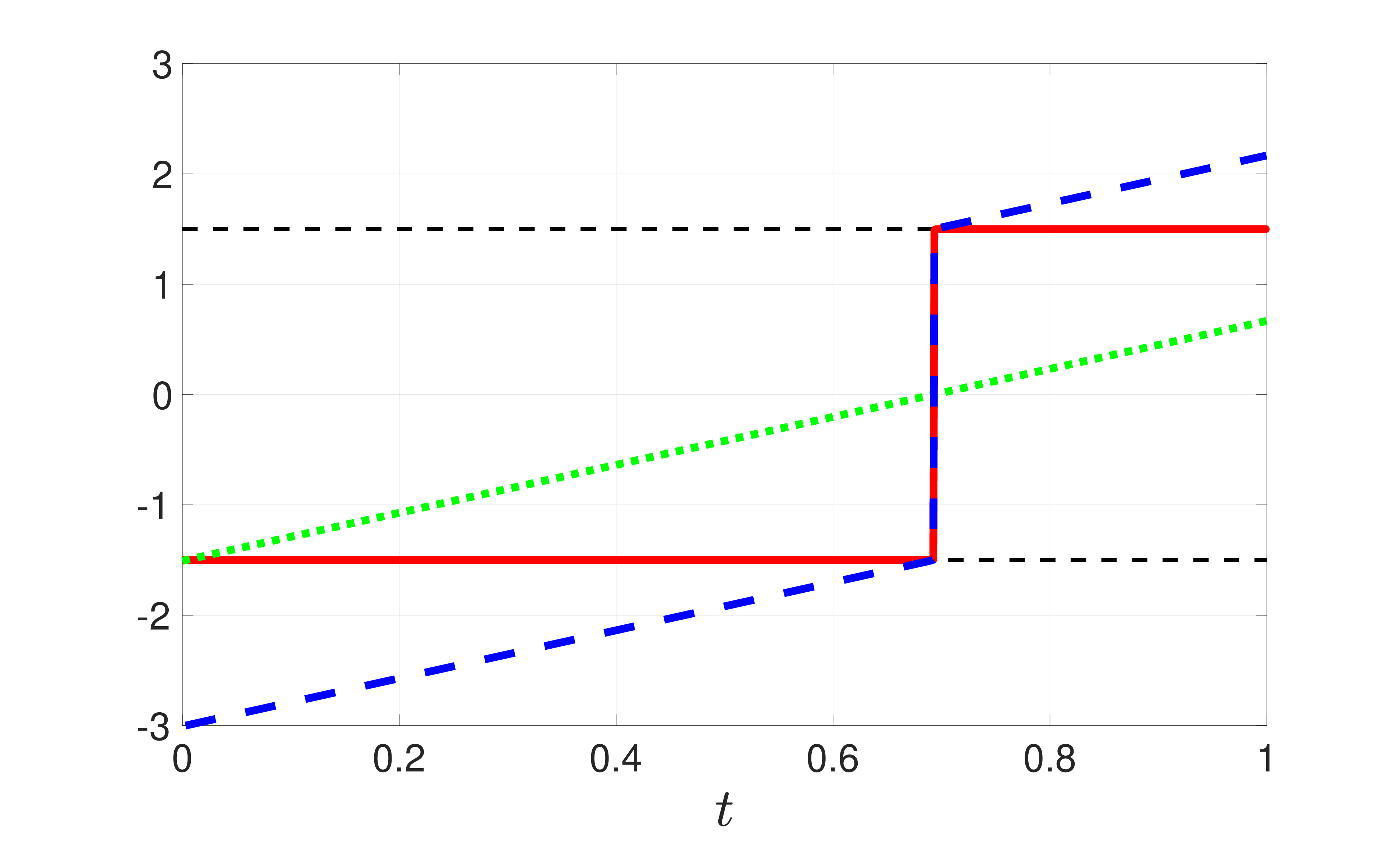}
\caption{$a = 1.5$}
\label{fig:a=1.5}
\end{subfigure}
\hfill
\begin{subfigure}{.49\textwidth}
\centering
\includegraphics[width=\textwidth]{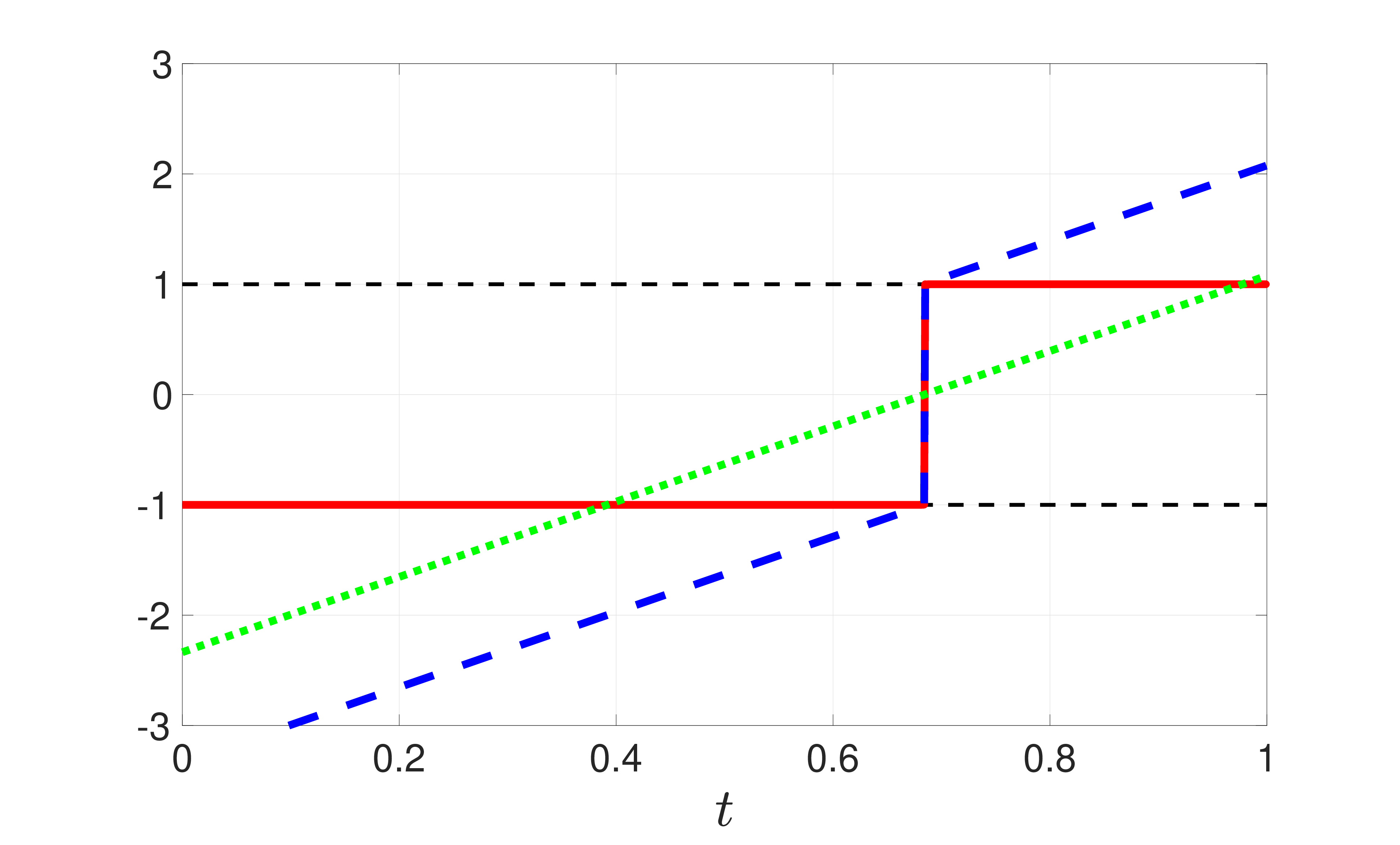}
\caption{$a = 1$}
\label{fig:a=1}
\end{subfigure}
\caption{\small\sf Double Integrator: (a) Critically feasible solution and (b)--(d) Best approximation solutions (infeasible case).}
\label{fig:DI}
\end{figure}

We have solved Problem~(Pcf) to find the critically feasible solution depicted in Figure~\ref{fig:DI}, where the solution curves for $u_{\cal A}$, $u_{\cal B}$ and $v$ are graphed.  With $10000$ time partition points, we obtained $a_c \approx 2.414$ ($2000$ time partition points only yields $a_c \approx 2.4$), which reconfirms the analytical solution $a_c = 1 + \sqrt{2} \approx 2.4142$ that was reported in Remark~\ref{control_crit_ex}.  We also observe (after zooming into the plot) that $t_c = 0.707$  which agrees with $t_c = 1/\sqrt{2} \approx 0.7071$ in Remark~\ref{control_crit_ex} up to three~dp.  The control $u_{\cal A}$ overlaps $u_{\cal B}$ since, in the critically feasible case, ${\cal A}\cap{\cal B} \neq\emptyset$ and so the gap function $v$ is the zero function.  The graph of $u = u_{\cal A} = u_{\cal B}$ in Figure~\ref{fig:a=2.414} in turn verifies the analytical expression in~\eqref{u_crit_ex}.  

For the infeasible case, i.e., when $a < a_c$, it is no longer possible to get a solution analytically, even for the relatively simple-looking double integrator problem.  In Figures~\ref{fig:a=2}--\ref{fig:a=1}, the solution plots for $a=2, 1.5$ and $1$ are shown, respectively.  The solution for $u_{\cal B}$ is of bang--bang type with one switching, verifying Corollary~\ref{cor: infeas_DI}.  The role of $v$ as a switching function is clear from these plots.  We recall the fact that $v = -\lambda_2$ by Theorem~\ref{thm:gap&bbang}, and point that $v$ appears linearly in the plots since $\lambda(t)$ is linear in $t$. The switching time for each case shown in Figures~\ref{fig:a=2}--\ref{fig:a=1} is found graphically as: (b) $t_s \approx 0.701$, (c) $t_s \approx 0.693$ and (d) $t_s \approx 0.685$.  Further numerical experiments with even smaller $a$ suggest that as $a\to 0$, $t_s\to 2/3$.

\subsection{Damped oscillator}
\label{ex:DO}

While the ODE underlying the double integrator problem is $\ddot{z}(t) = u(t)$, the ODE underlying the damped oscillator problem is $\ddot{z}(t) + 2\,\zeta\,\omega_n\,\dot{z}(t) + \omega_n^2(t)\,z(t) = u(t)$, with the damping and stiffness terms added, where the parameter $\omega_n>0$ is the natural frequency and the parameter $\zeta\ge 0$ is the damping ratio of the system.  When $\zeta = 0$ the system is referred to as the (simple) harmonic oscillator.  Defining the state variables $x_1 := z$ and $x_2 := \dot{z}$ (as in the case of the double integrator), one gets, for the case of the damped oscillator,
\[
A = \left[\begin{array}{cc}
0 &\ 1 \\
-\omega_n^2 &\ -2\,\zeta\,\omega_n 
\end{array}\right],\quad
b = \left[\begin{array}{c}
0 \\
1
\end{array}\right],
\]
again using the notation in Problems~(Pf) and~(Pcf). As the time interval of the problem we take $[0,1]$, and set the boundary conditions to be the same as those of the double integrator problem: $x(0) = (0, 1)$ and $x(1) = (0, 0)$.  We set the values of the parameters as $\omega_n = 20$ and $\zeta = 0.1$.

First, we can assert that the damped oscillator control system is controllable since $\rank Q_c = \rank [b\ |\ Ab] = 2 = n$.

\begin{figure}[t!]
\centering
\begin{subfigure}{.5\textwidth}
\centering
\includegraphics[width=\textwidth]{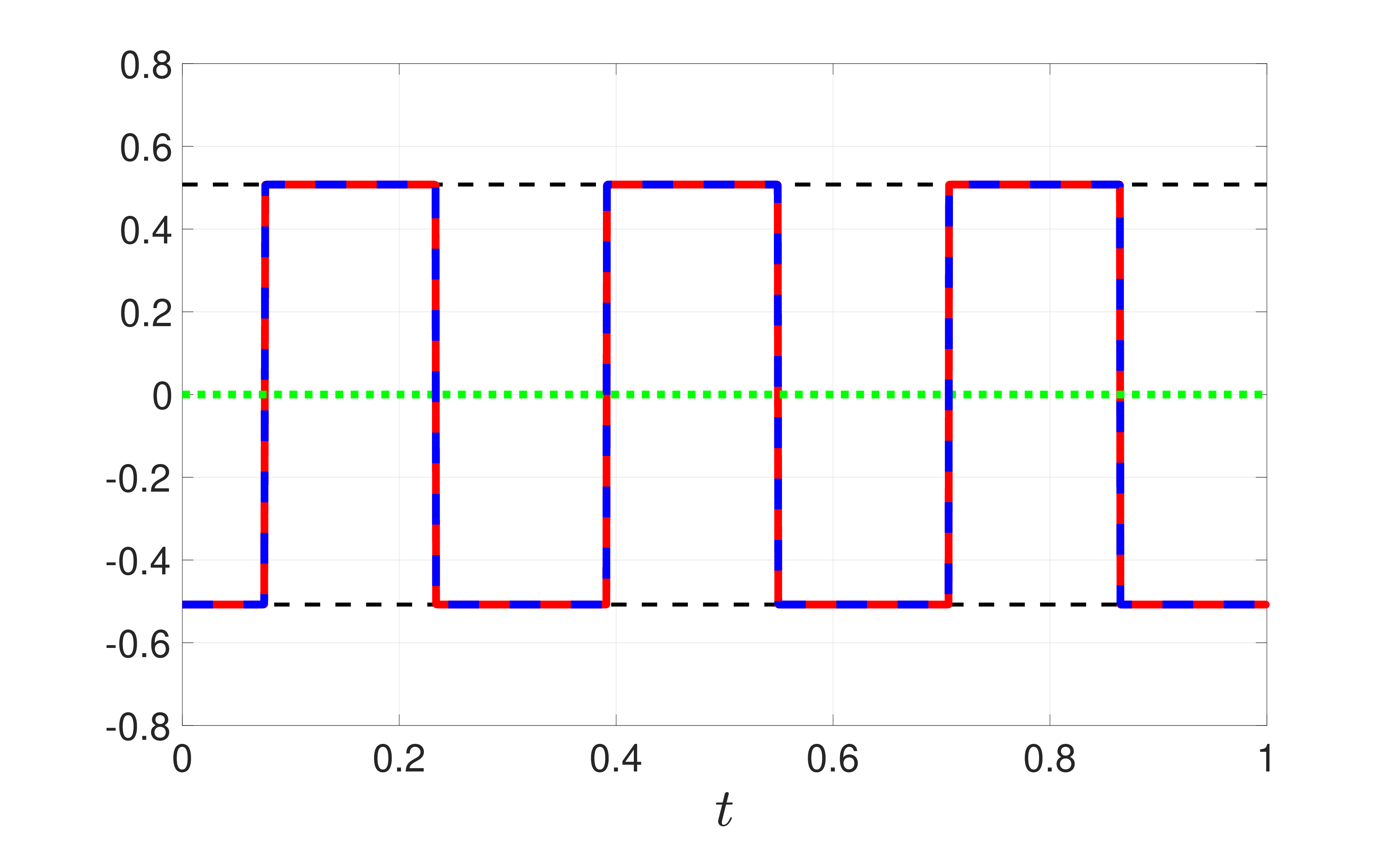}
\caption{$a = a_c \approx 0.475$}
\label{fig:a=0.475}
\end{subfigure}
\hfill
\begin{subfigure}{.49\textwidth}
\centering
\includegraphics[width=\textwidth]{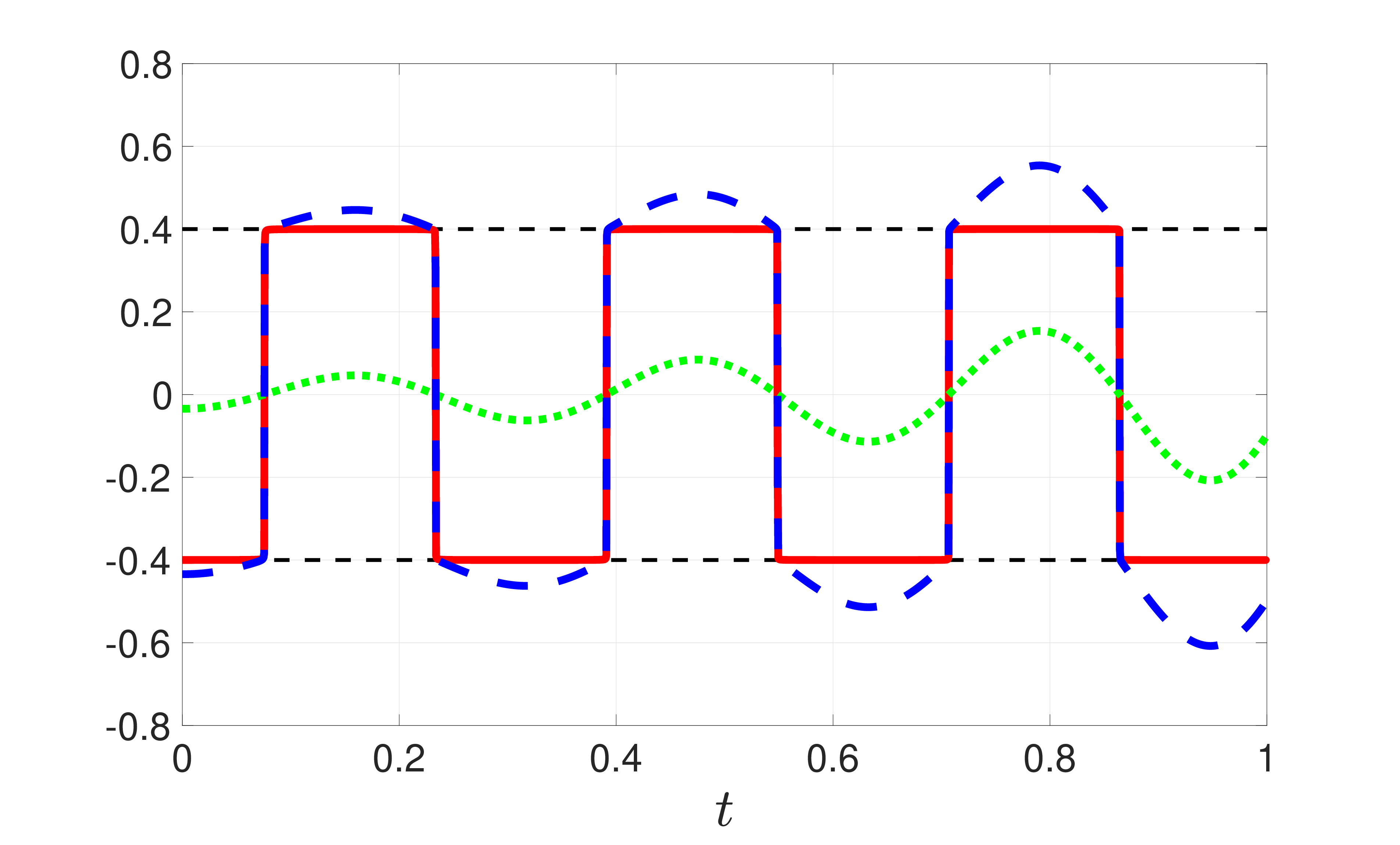}
\caption{$a = 0.4$}
\label{fig:a=0.4}
\end{subfigure}
\begin{subfigure}{.5\textwidth}
\centering
\includegraphics[width=\textwidth]{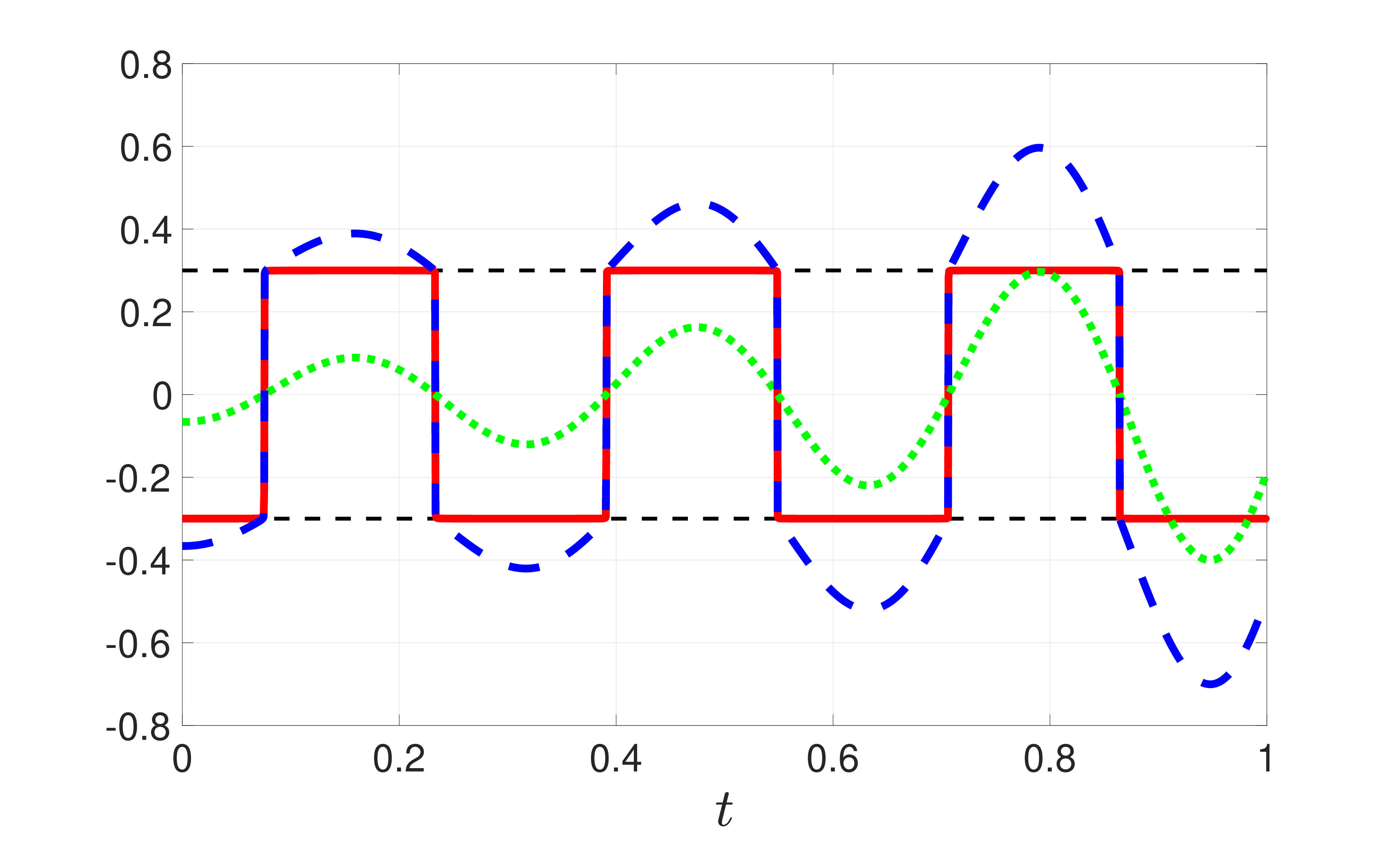}
\caption{$a = 0.3$}
\label{fig:a=0.3}
\end{subfigure}
\hfill
\begin{subfigure}{.49\textwidth}
\centering
\includegraphics[width=\textwidth]{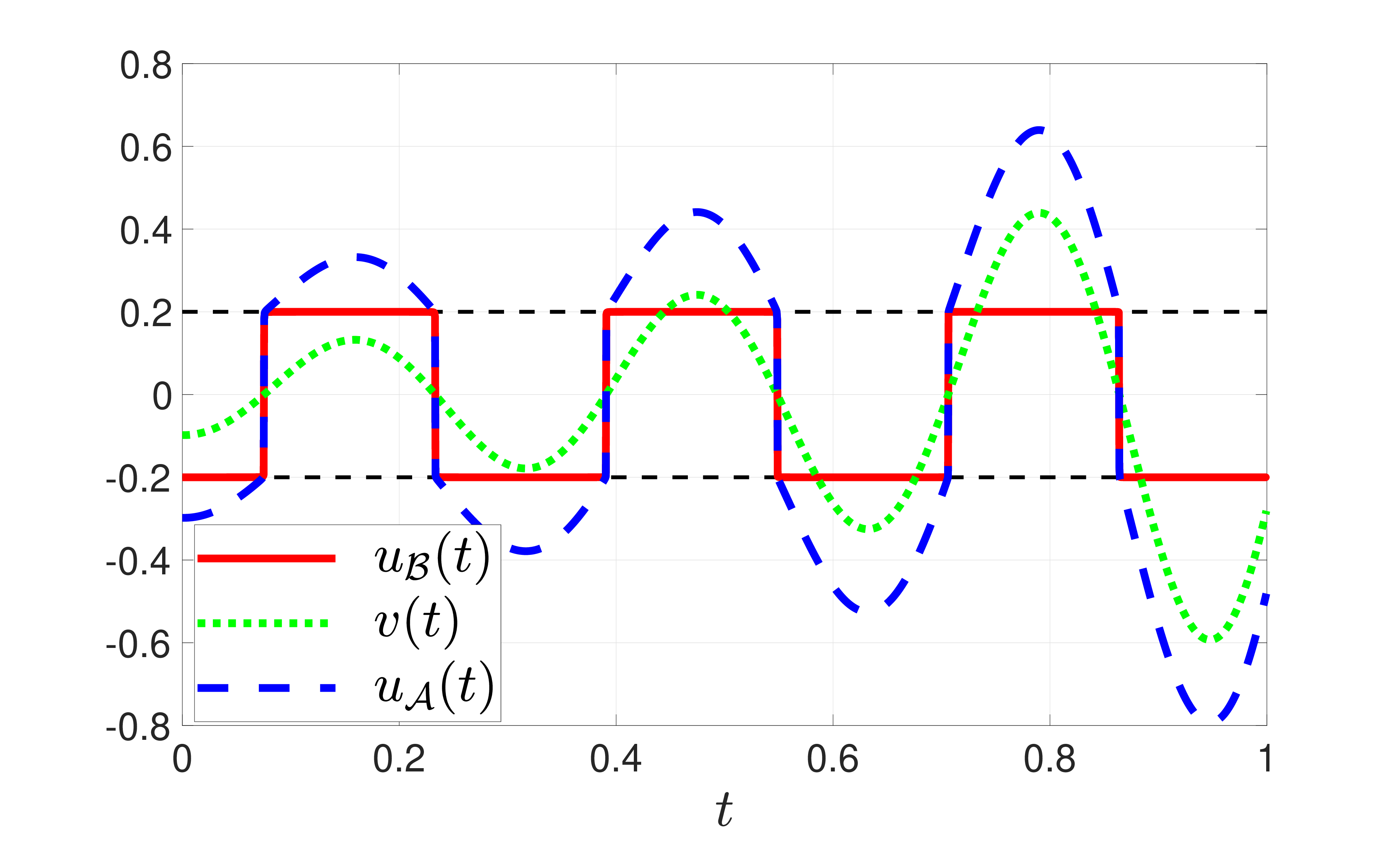}
\caption{$a = 0.2$}
\label{fig:a=0.2}
\end{subfigure}
\caption{\small\sf Damped oscillator: (a) Critically feasible solution and (b)--(d) Best approximation solutions
(infeasible case).}
\label{fig:DO}
\end{figure}

Numerical solutions to Problems~(Pf) and~(Pcf) are depicted in Figure~\ref{fig:DO}:  The critically feasible solution to (Pcf) appears in Figure~\ref{fig:a=0.475} and the infeasible solutions to (Pf) appear in Figures~\ref{fig:a=0.4}--\ref{fig:a=0.2}.
With $2\times 10^5$ time partition points, we have obtained $a_c \approx 0.475$, correct to three dp.  No analytical solution is available.  As expected from Theorem~\ref{theo:crit}, the control $u_{\cal B}$ is of bang--bang type, and it overlaps with $u_{\cal A}$.  The control $u_{\cal B}$ appears to be periodic with six switchings.  Further experiments with various other boundary conditions not only result in different $a_c$ but also in different number of switchings; but the control $u_{\cal B}$ still appears to be periodic.

In Figures~\ref{fig:a=0.4}--\ref{fig:a=0.2}, we provide the respective solution plots for $a=0.4, 0.3$ and $0.2$.  The solution for $u_{\cal B}$ is of bang--bang type as asserted by Theorem~\ref{thm:gap&bbang}.  It is observed that not only the control $u_{\cal B}$ appears to be periodic but also the switching times seem to remain the same as those in the critically feasible solution in Figure~\ref{fig:a=0.475}.  The role of the gap function $v$ as a switching function is clear from these plots, verifying \eqref{ex:uB}.

\subsection{Machine tool manipulator}
\label{ex:mctool}

A linear ODE model and an associated optimal control problem for a machine tool manipulator is described in~\cite{ChrMauZir2010}.  Using the notation in Problems~(Pf) and~(Pcf), one has that
\begin{align*}
A &= \begin{bmatrix}
0 & 0 & 0 & 1 & 0 & 0 & 0 \\
0 & 0 & 0 & 0 & 1 & 0 & 0 \\
0 & 0 & 0 & 0 & 0 & 1 & 0 \\
-4.441\times10^7/450 & 0 & 0 & -8500/450 & 0 & 0 & -1/450 \\
0 & 0 & 0 & 0 & 0 & 0 & 1/750 \\
0 & 0 & -8.2\times10^6/40 & 0 & 0 & -1800/40 & 0.25/40 \\
0 & 0 & 0 & 0 & 0 & 0 & -1/0.0025
\end{bmatrix}, \\ b &= \begin{bmatrix}
0 & 0 & 0 & 0 & 0 & 0 & 1/0.0025
\end{bmatrix}^T.
\end{align*}
Clearly, the control system has seven state variables and one control variable.  In~\cite{ChrMauZir2010}, the time interval for the dynamics is chosen to be $[0,0.0522]$, and the boundary conditions are imposed as $x(0)=(0,0,0,0,0,0,0)$, $x(0.0522)=(0,0.0027,0,0,0.1,0,0)$.  Moreover, the control variable is constrained as $-2000 \le u(t) \le 2000$, under which the problem is feasible.  A minimum-energy control model for this machine tool manipulator has also subsequently been studied in~\cite{BurKayMaj2014, BurCalKayMou2023, BurCalKay2024}.

It can easily be verified that the machine tool manipulator control system is controllable as $\rank Q_c = \rank [b\ |\ Ab\ |\ A^2b\ |\ \cdots |\ A^6b] = 7 = n$.

\begin{figure}[t!]
\centering
\begin{subfigure}{.5\textwidth}
\centering
\includegraphics[width=\textwidth]{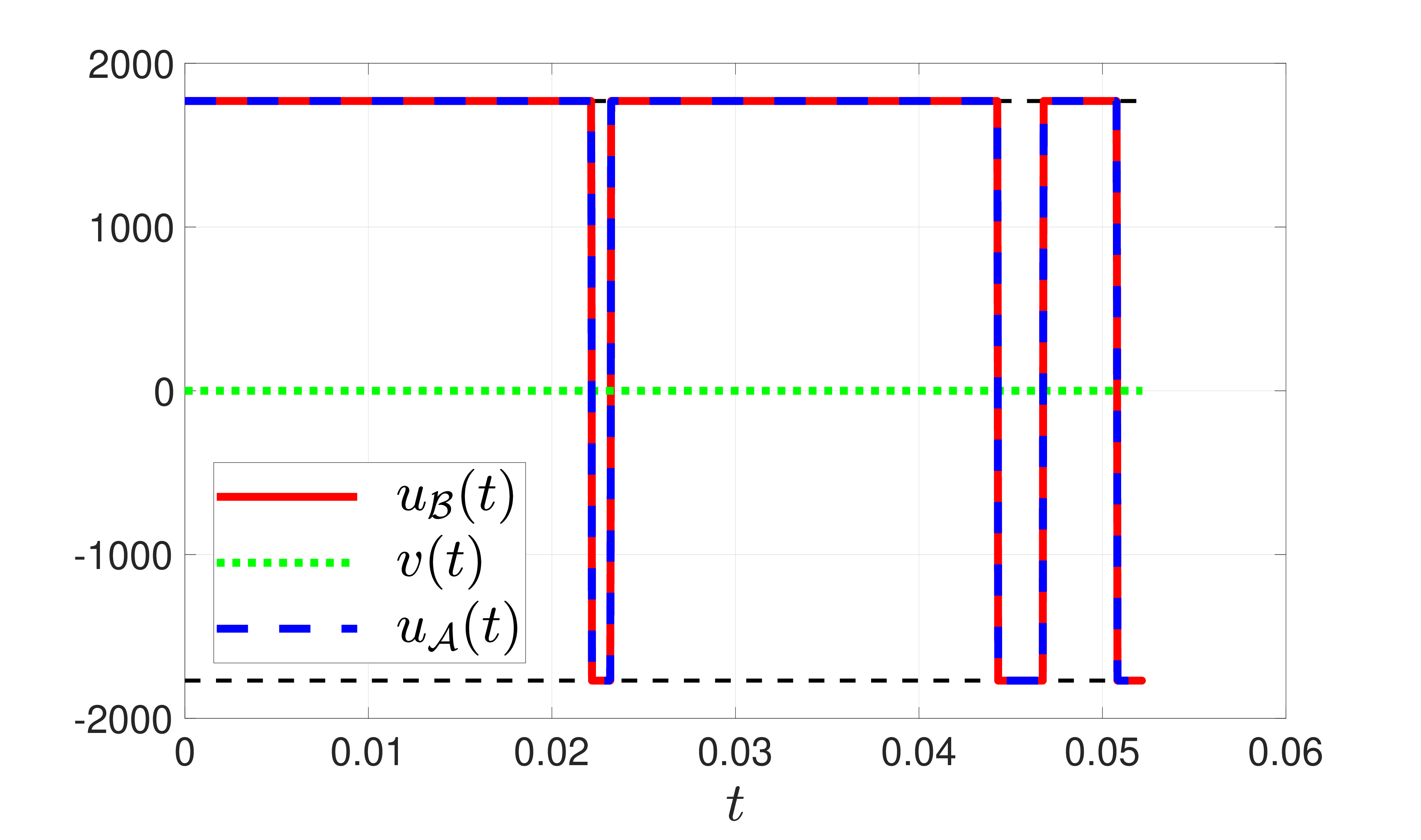}
\caption{$a = a_c \approx 1769.46$}
\label{fig:a=1769.46}
\end{subfigure}
\hfill
\begin{subfigure}{.49\textwidth}
\centering
\includegraphics[width=\textwidth]{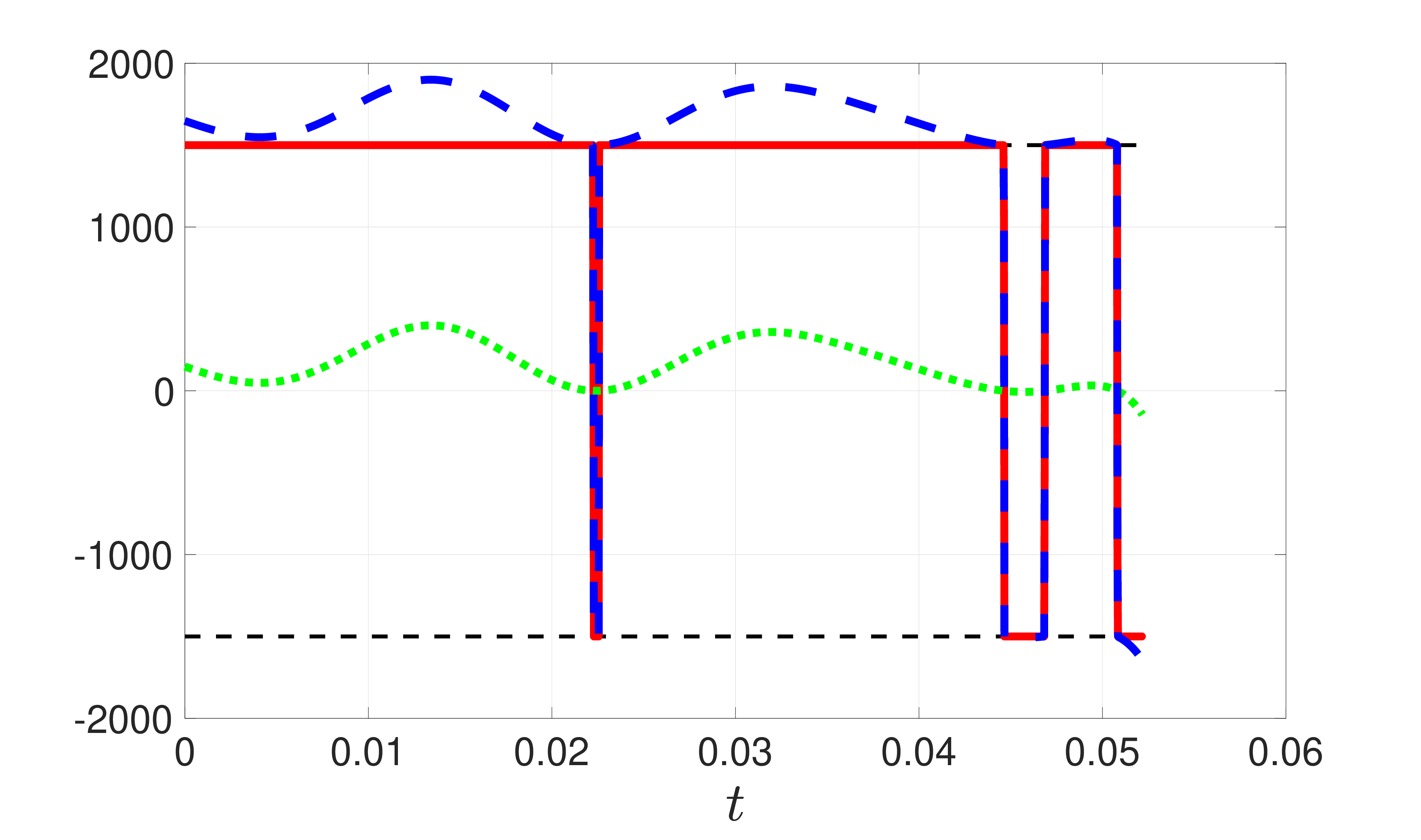}
\caption{$a = 1500$}
\label{fig:a=1500}
\end{subfigure}
\begin{subfigure}{.5\textwidth}
\centering
\includegraphics[width=\textwidth]{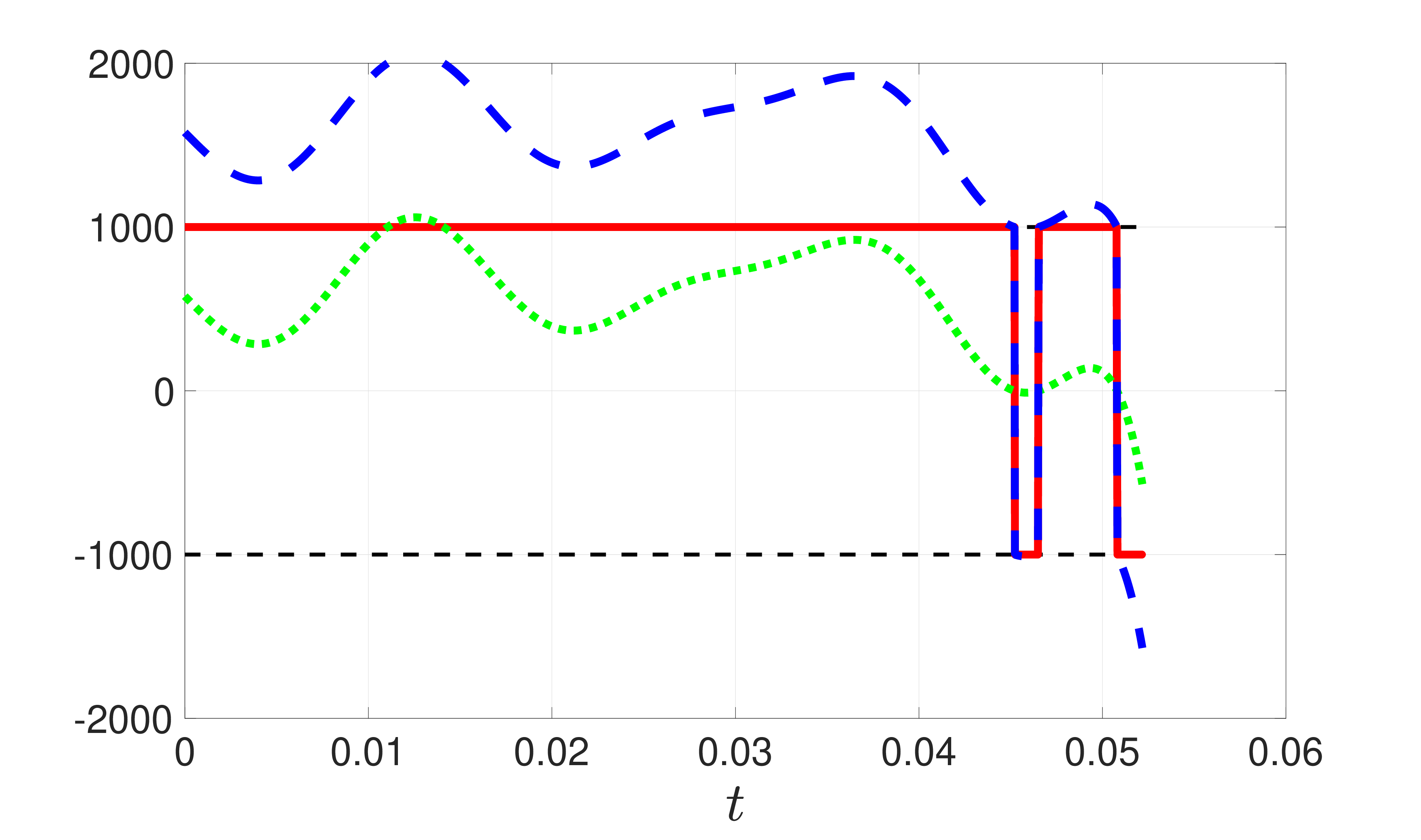}
\caption{$a = 1000$}
\label{fig:a=1000}
\end{subfigure}
\hfill
\begin{subfigure}{.49\textwidth}
\centering
\includegraphics[width=\textwidth]{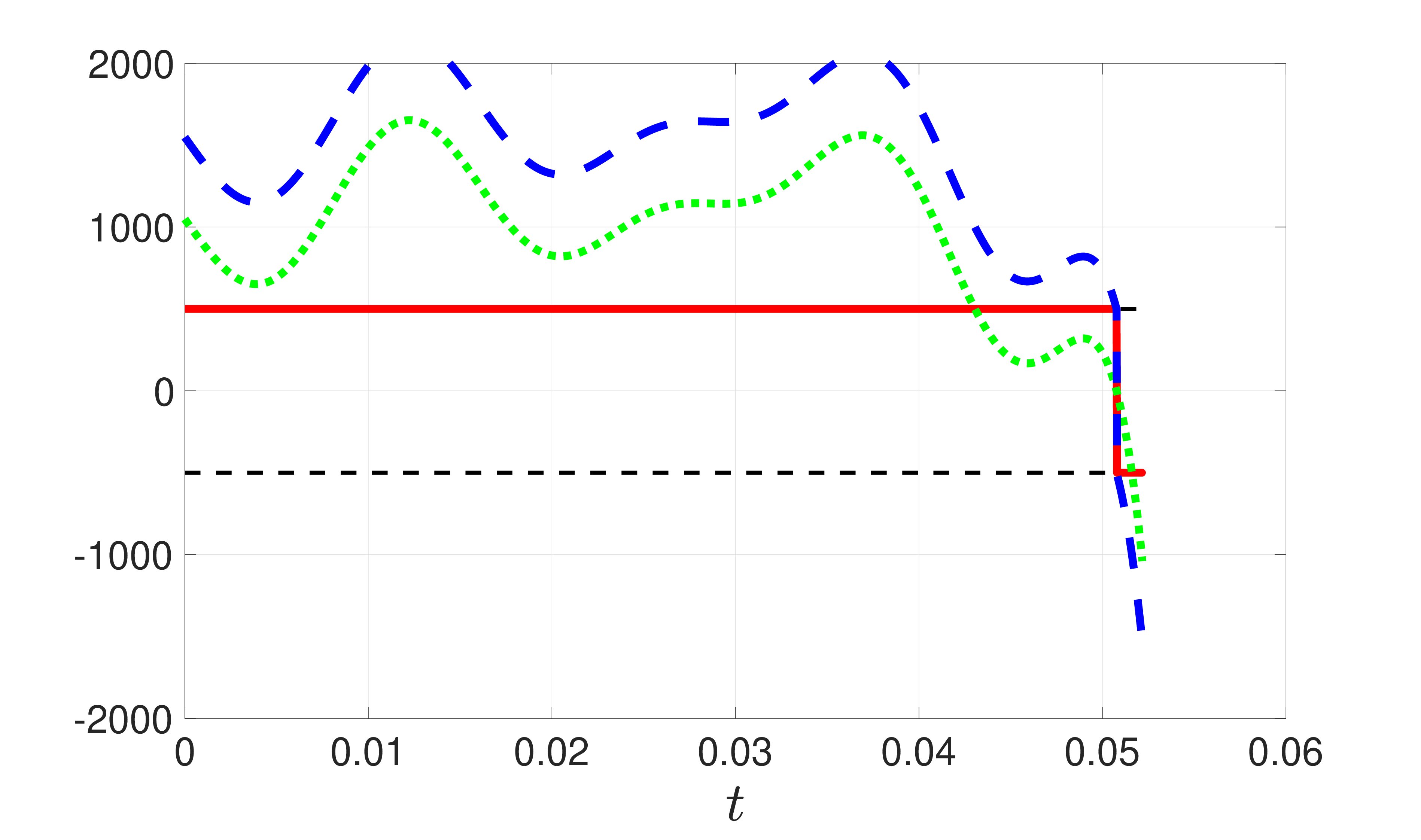}
\caption{$a = 500$}
\label{fig:a=500}
\end{subfigure}
\caption{\small\sf Machine tool manipulator: (a) Critically feasible solution and (b)--(d) Best approximation solutions (infeasible case).}
\label{fig:mctool}
\end{figure}

Numerical solutions to Problems~(Pf) and~(Pcf) are depicted in Figure~\ref{fig:mctool}:  The critically feasible solution to (Pcf) is depicted in Figure~\ref{fig:a=1769.46} and the infeasible solutions to (Pf) appear in Figures~\ref{fig:a=1500}--\ref{fig:a=500}.  With $10000$ time partition points, and implementing SNOPT (instead of Ipopt) with AMPL, we obtained $a_c \approx 1769.46$---Just on this occasion Ipopt was not successful in getting a solution.  As asserted by Theorem~\ref{theo:crit}, the control $u_{\cal B}$ is of bang--bang type, and it overlaps with $u_{\cal A}$.  The control $u_{\cal B}$ appears to have five switchings.

In Figures~\ref{fig:a=1500}--\ref{fig:a=500}, we provide the solutions for $a=1500, 1000$ and $500$.  The solution for $u_{\cal B}$ is of bang--bang type as asserted by Theorem~\ref{thm:gap&bbang}.  We observe that the number of switchings decreases with decreasing $a$: With $a = 500$, and by other experiments with $a < 500$, numerical solutions suggest that there is only one switching.  The role of the gap function $v$ as a switching function is clear from these plots for this example as well, verifying \eqref{ex:uB}.

\section{Conclusion}
\label{sec:conc}
We have studied a class of infeasible and critically feasible optimal control problems and proved that the best approximation control in the box constraint set is of bang--bang type for each problem.  We presented a full analytical solution for the critically feasible double integrator problem.  We numerically illustrated these results on three increasingly difficult example problems.  For numerical computations, we discretized the example problems and solved large-scale optimization problems using popular optimization software.

The numerical scheme described in this paper can further be improved:  Since the solution structure is known to be of bang--bang type, one can solve problems discretized over a coarse time grid first, and then, once there is a rough idea about the number of switchings and the places of the switchings, a switching time parameterization technique (see \cite{KayNoa2013, MauBueKimKay2005, OsmMau2012}) can  be implemented to find the switching times accurately.

The paper~\cite{BauMou2023} motivated us in looking at infeasible optimal control problems and study the properties of the gap (function) vector.  Reference~\cite{BauMou2023} also studies in a theoretical setting an application of the Douglas--Rachford algorithm to infinite-dimensional infeasible optimization problems in Hilbert space.  A next step would be to employ the Douglas--Rachford algorithm to solve the infeasible optimal control problems we are looking at in the present paper.  It would also be interesting to employ and test the Peaceman--Rachford algorithm~\cite[Section 26.4 and Proposition~28.8]{BC2017}, which is another projection type method, for the class of problems we have studied.

It would be interesting to extend the applications in this paper to the case of infeasible and critically infeasible nonconvex optimal control problems, including those with state constraints, and carry out numerical experiments, although no theory is available yet for such more general classes of problems in infinite dimensions.

{\small 

}
	
\end{document}